\newtheorem{theorem}{Theorem}[section]
\newtheorem{lemma}[theorem]{Lemma}
\newtheorem{proposition}[theorem]{Proposition}
\newtheorem{corollary}[theorem]{Corollary}
\newtheorem{example}[theorem]{Example}
\theoremstyle{definition}
\newtheorem{defn}[theorem]{Definition}
\newtheorem{algo}[theorem]{Algorithm}
\def\disc{\textrm{disc}}
\def\F{\mathbb{F}}
\def\C{\mathcal{C}}
\def\K{\mathbb{K}}
\def\L{\mathbb{L}}
\def\Q{\mathbb{Q}}
\def\E{\mathbb{E}}
\def\R{\mathbb{R}}
\def\spec{\mathrm{Spec}}
\begin{document}
\title{Fields of Parametrization and Optimal Affine Reparametrization of
Rational Curves}
\author{Luis Felipe Tabera\footnote{The author is supported by the contract
MTM2005-08690-C02-02}}
\maketitle
\begin{abstract}
In this paper we present three related results on the subject of fields of
parametrization. Let $\mathcal{C}$ be a rational curve over a field of
characteristic zero. Let $\mathbb{K}$ be a field finitely generated over
$\mathbb{Q}$, such that it is a field of definition of $\mathcal{C}$ but not a
field of parametrization. It is known that there are quadratic extensions of
$\mathbb{K}$ that parametrize $\mathcal{C}$. First, we prove that there are
infinitely many quadratic extensions of $\mathbb{K}$ that are fields of
parametrization of $\mathcal{C}$.

As a consequence, we prove that the witness variety, that appear in the context
of the parametric Weil's descente method, is always a special curve related to
algebraic extensions, called hypercircle. It is possible that the witness
variety is not a hypercircle for the given extension, but for an alternative
one.

We use these two facts to present an algorithm to solve the following optimal
reparametrization problem. Given a birational parametrization $\phi(t)$ of a
curve $\mathcal{C}$, compute the affine reparametrization $t\rightarrow at+b$
such $\phi(at+b)$ has coefficients over a field as small as possible. The main
advantage of this algorithm is that it does not need to compute any rational
point on the curve.
\end{abstract}

\section{Introduction}
Let $\K$ be a characteristic zero field, $\F$ its algebraic closure. Let $\C
\subseteq \mathbb{F}^N$ be a rational curve. We say that $\K$ is a \textit{field
of definition} of $\C$ if $\C$ can be described as the zero set of a system of
polynomials with coefficients in $\K$. Analogously, we say that $\C$ is
\textit{parametrizable} over $\K$ if there is a parametrization of $\C$ defined
by rational functions in $\K(t)$. Given any rational curve $\C$ there is a well
defined notion of field of definition $k$ of $\C$ which is the smallest field of
definition under inclusion. However, if $\C$ is not parametrizable over its
smallest field of definition, the best we can say is that there are
minimal fields of parametrization that are quadratic algebraic extensions of $k$
\cite{CHEV, SW1, Hilbert-Hurwitz}.

In this article we study how many minimal fields of parametrization $\C$ has.
The answer is trivial if $\C$ is parametrizable over its smallest field of
definition, but it is more subtle if $\C$ is not parametrizable over $k$. We
prove, using elementary tools from commutative algebra, that there are
infinitely many quadratic algebraic extensions $k(\gamma)$ of $k$ that
parametrizes $\C$.

This result is useful in the context of algebraically optimal reparametrization
of a rational curve $\C$. Suppose that $\C$ is a rational curve defined by a
birational parametrization $\phi(t)$ with coefficients over $\K(\alpha)$, where
$\alpha$ is algebraic of degree $n$ over $\K$. The problems of deciding if $\C$
can be parametrized over $\K$ and computing such a parametrization have been
studied (among others) in \cite{ARS-1,ARS-2, Ultraquadrics, Hyperc_to_units,
SV-Quasipolynomial, SW1}. Any birational parametrization of $\C$ with
coefficients in $\K$ is of the form $\phi(\frac{at+b}{ct+d})$, where
$\frac{at+b}{ct+d}\in \K(\alpha)(t)$. Computing a valid linear fraction
$\frac{at+b}{ct+d}$ that reparametrices $\phi$ over $\K$ is at least as hard as
computing a point in $\C \cap \K^N$. A parametric version of Weil's descente
method is proposed in \cite{ARS-2} to transform the curve $\C$ to a witness
variety where the problem is thought to be easier. It is known
\cite{ARS-2,Ultraquadrics} that $\C$ is parametrizable over $\K$ if and only if
the witness variety is a special curve called $\alpha$-hypercircle. These
$\alpha$-hypercircles have many interesting geometric properties
\cite{Generalizing-circles}, some of them related with the algebraic extension
$\K\subseteq \K(\alpha)$.

An $\alpha$-hypercircle is a curve that depends on $\K$ and $\alpha$. In this
article we prove that, if $\C$ is defined over $\K$, then the associated witness
variety to $\phi$ is a curve that is always a hypercircle for $\K(\beta)$ and
$\alpha$, where $\beta$ is algebraic of degree at most $2$ over $\K$ and
$[\K(\beta, \alpha):\K(\beta)]=[\K(\alpha) :\K]=n$. As a consequence we can
derive the following property of hypercircles: let $r$ be the geometric degree
of the hypercircle associated to $\phi$. Then there exists an intermediate field
$\K\subseteq \K(\gamma)\subseteq \K(\alpha)$, $[\K(\gamma) :\K]=r$, and an
affine change of variable $at+b$ such that $\phi(at+b)\in \K(\gamma)(t)$.
Moreover, this field $\K(\gamma)$ is of minimal degree among the fields
obtained by an affine change of parameter $\phi(at+b)$.
\[[\K(\gamma):\K]\leq [\K(\textrm{coeffs}(\phi(at+b))):\K]\]
The main advantage is that computing $\gamma$, $at+b$ and points in $\C\cap
\K(\gamma)^N$ is relatively easy compared to the problem of finding points in
$\C\cap \K^N$.

\subsection{Preliminaries}\label{sec:prelim}

Let $\mathbb{K}$ be a characteristic zero field, $\F$ its algebraic closure. Let
$\C\subseteq \F^N$ be an irreducible algebraic variety over $\mathbb{F}$. We say
that $\mathbb{K}$ is a \textit{field of definition} of $\C$ if $\C$ can be
expressed as the set of common zeros in $\F^N$ of an ideal in $\K[x_1, \ldots,
x_N]$.

From an algebraic point of view, consider the canonical ring inclusions
$\K\subseteq \mathbb{K}[x_1,\ldots, x_N]\subseteq \mathbb{F}[x_1,\ldots, x_N]$.
Let $P\in \spec (\F[x_1,\ldots,x_N])$ be a prime ideal. Denote by
\[\K(P)=Frac\frac{\K[x_1,\ldots,x_N]}{P^c}\] where $P^c$ is the contracted ideal
$P^c=P\cap \K[x_1,\ldots,x_n]$. Consider the induced field extensions
\[\K\subseteq \K(P)\subseteq \F(P)\]

Recall that a finitely generated extension of fields $A\subseteq B$ is
\textit{regular} if the extension is separable and $A$ is algebraically closed
in $B$ (cf. \cite{ZS-2}). Also, an ideal $P\in\spec(\mathbb{K}[x_1,\ldots,x_N])$
is called \textit{absolutely prime} if $P^e$ is prime in $\L[x_1,\ldots,x_N]$
for any extension $\L$ of $\K$.

\begin{proposition}\label{prop:def_field}
The following statements are equivalent:
\begin{enumerate}
\item $P$ has a set of generators with coefficients in $\K$.
\item $P=P^{ce}$ is an extended ideal for $\K[x_1,\ldots,x_N]\subseteq
\F[x_1,\ldots,x_N]$.
\item $P^{ce}$ is prime in $\F[x_1,\ldots,x_N]$.
\item $P^c$ is absolutely prime.
\item The field extension $\K \subseteq \K(P)$ is regular.
\end{enumerate}
\end{proposition}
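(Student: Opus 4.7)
The plan is to establish the equivalences in three groups, using the fact that since $\F/\K$ is algebraic, $\F[x_1,\ldots,x_N]$ is integral over $\K[x_1,\ldots,x_N]$, and bridging finally to (5) via tensor products. The equivalence $(1) \Leftrightarrow (2)$ is a definitional unravelling: a $\K$-generating set of $P$ is a generating set of $P^c$, whose extension back to $\F[x_1,\ldots,x_N]$ recovers $P^{ce} = P$; conversely, generators of $P^c$ automatically generate $P^{ce}$. For $(2) \Leftrightarrow (3)$ I would use incomparability of primes in integral extensions: the ideals $P^{ce} \subseteq P$ are comparable primes of $\F[x_1,\ldots,x_N]$ both contracting to $P^c$, so integrality forces them to coincide.

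For $(3) \Leftrightarrow (4)$, note $P^{ce} = P^c \cdot \F[x_1,\ldots,x_N]$, so $(4) \Rightarrow (3)$ is immediate upon taking $\L = \F$. The converse is the standard fact that a prime of $\K[x_1,\ldots,x_N]$ remaining prime after base change to the algebraic closure is absolutely prime; I would invoke this from the theory of regular extensions rather than reprove it. For $(4) \Leftrightarrow (5)$, set $R = \K[x_1,\ldots,x_N]/P^c$, a domain with fraction field $\K(P)$. Then $R \otimes_\K \L = \L[x_1,\ldots,x_N]/(P^c \cdot \L[x_1,\ldots,x_N])$, and since $\L$ is $\K$-flat the inclusion $R \hookrightarrow \K(P)$ tensors up to an injection $R \otimes_\K \L \hookrightarrow \K(P) \otimes_\K \L$ whose target is the localization of the source at $R \setminus \{0\}$. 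Thus $R \otimes_\K \L$ is a domain for every extension $\L/\K$ if and only if $\K(P) \otimes_\K \L$ is; in characteristic zero this universal domain condition is precisely regularity of $\K \subseteq \K(P)$, separability being automatic.

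The main obstacle I expect is the step $(3) \Rightarrow (4)$: going from primality after base change to $\F$ to primality after base change to an arbitrary, possibly transcendental, extension of $\K$. This is a non-trivial point in the theory of regular extensions and I would cite Zariski--Samuel rather than develop it from first principles. The remaining steps reduce to ideal-theoretic folklore (incomparability, the contraction-extension identity $(P^{ce})^c = P^c$) and formal manipulation of tensor products with flat modules.
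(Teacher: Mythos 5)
Your proposal is correct and is, in substance, the same argument the paper intends: the paper's entire proof is the single line ``It follows easily from the results in \cite{ZS-2} Ch.~VII \S 11'', and the two genuinely substantive steps in your sketch --- the implication $(3)\Rightarrow(4)$ and the characterization of regularity of $\K\subseteq\K(P)$ by $\K(P)\otimes_\K\L$ being a domain for all $\L$ --- are exactly the results of that section, which you also (appropriately) cite rather than reprove. The routine reductions you supply explicitly (the contraction--extension unravelling for $(1)\Leftrightarrow(2)$, incomparability in the integral extension $\K[x_1,\ldots,x_N]\subseteq\F[x_1,\ldots,x_N]$ for $(2)\Leftrightarrow(3)$, and the flatness/localization argument identifying $\L[x_1,\ldots,x_N]/P^c\L[x_1,\ldots,x_N]$ with $R\otimes_\K\L$ for $(4)\Leftrightarrow(5)$) are all sound and are a legitimate filling-in of what the paper leaves implicit.
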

\begin{proof}
It follows easily form the results in \cite{ZS-2} Ch VII \S 11.
\end{proof}

If these properties hold, we say that $\K$ is \textit{a field of definition} of
$P$. Obviously, $\F$ is always a field of definition of any $P$ and any
extension of a field of definition is again a field of definition. It is known
that there is a minimum field of definition $k$ of $P$ with respect to
inclusion. This field $k$ is \textit{the} field of definition of $P$. This field
is always finitely generated over $\Q$.

\begin{defn}\label{defn:K-birational}
Let $P \in \spec(\F[x_1,\ldots, x_N])$, $Q\in \spec(\F[y_1,\ldots, y_M])$ be two
prime ideals. A \textit{$\K$-isomorphism} of $\F(P)$ and $\F(Q)$ is an
isomorphism of fields $\phi$ such that $\phi|_\F=id$, $\phi([x_i])\in \K(Q)$,
$\phi^{-1}([y_j])\in \K(P)$, $1\leq i\leq N$, $1\leq j\leq M$. In this case we
say that the varieties defined by $P$ and $Q$ are \textit{$\K$-birational}.
\end{defn}

\begin{defn}\label{defn:field_of_parametrization}
Let $P$ define a rational curve. That is \[\F(P) \cong \F(t)\] We say that $\K$
is \textit{a field of parametrization} of $P$ if there is a $\K$-isomorphism
between $\F(P)$ and $\F(t)$. This isomorphism is called a
\textit{$\K$-parametrization}. We say in this case that the curve is
\textit{$\K$-parametrizable}.
\end{defn}

If $P$ is $\K$-parametrizable, then it follows that $\K(P) \cong \K(t)$ by an
isomorphism that is the identity in $\K$. Then, since $\K\subseteq \K(t)$ is
regular, by Proposition~\ref{prop:def_field}, $\K$ is a field of definition.

But it could happen that $\K$ is a field of definition of $P$ without being a
field of parametrization. Then, it is known \cite{CHEV} that there exists at
least an element $\eta$ that is quadratic over $\K$ such that $\K(\eta)$ is a
field of parametrization of $P$. So, if the field of definition $k$ of $P$ is
not a field of parametrization, there are always minimal fields of
parametrization under inclusion that are quadratic extensions of $k$.

\begin{example}\label{example:varios_casos}
Let $P=(x+iy), Q=(x^2+y^2+1) \subseteq \mathbb{C}[x,y]$, $\K=\mathbb{R}$. Then
$P^c=(x^2+y^2)$, the extension $\mathbb{R} \subseteq \R(P)$
is not regular, because $(x/y)^2+1=0$, $x/y$ is algebraic over $\mathbb{R}$. In
fact
\[\begin{matrix}\phi:&\textrm{Frac\ } \mathbb{R}[x,y]/(x^2+y^2)& \rightarrow
&\mathbb{C}(x)\\
  &[x]&\mapsto&x\\
  &[y]&\mapsto&ix
  \end{matrix}
\]
is an isomorphism of fields. So $\mathbb{R}$ is not a field of definition of
$P$. On the other hand, $\mathbb{R}$ is a field of definition of $Q$ but
$\mathbb{R}(Q)$ is not a real field, so it is not isomorphic to $\mathbb{R}(t)$
and $\mathbb{R}$ is not a field of parametrization of $Q$ (cf. \cite{RS-2}). If
we look for the fields of parametrization of $Q$ that contains $\mathbb{R}$, the
only possibility is the complex field. This is not a contradiction with the aim
to prove that there are infinitely many minimal fields of parametrization,
because $\mathbb{R}$ is not \textbf{the} field of definition of $Q$. We will
prove that there are infinitely many quadratic fields over $\Q$ that
parametrices $Q$.
\end{example}

\begin{proposition}
Let $P\in \spec(\F[x_1,\ldots, x_N])$ define a curve, let $\K$ be a field of
definition of $P$ then $P$ is $\K$-birational to a plane curve $Q$ defined over
$\K$.
\end{proposition}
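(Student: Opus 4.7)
The plan is to construct a plane model via the primitive element theorem applied to the function field $\K(P)$, and to use the hypothesis that $\K$ is a field of definition (equivalently, that $\K\subseteq \K(P)$ is regular, by Proposition~\ref{prop:def_field}) to ensure that the minimal polynomial produced remains absolutely irreducible, so its zero locus is a prime ideal of $\F[y_1,y_2]$ cut out by a polynomial in $\K[y_1,y_2]$.

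First I would observe that $\K(P)$ has transcendence degree $1$ over $\K$: since $\F(P)$ is the compositum $\F\cdot \K(P)$ and $\K(P)/\K$ is regular, the transcendence degree passes from $\F(P)/\F$ (which equals $1$, as $P$ defines a curve) to $\K(P)/\K$. In particular, at least one class $[x_i]$ is transcendental over $\K$, and I choose such an index. The extension $\K([x_i])\subseteq \K(P)$ is then finite and, because $\mathrm{char}\,\K=0$, separable; the primitive element theorem provides $\theta\in \K(P)$ with $\K(P)=\K([x_i],\theta)$. Clearing denominators in the minimal polynomial of $\theta$ over $\K([x_i])$ yields an irreducible $f(y_1,y_2)\in \K[y_1,y_2]$ vanishing on the pair $([x_i],\theta)$.

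The central step is to verify that $f$ is absolutely irreducible, so that $Q:=(f)^e\in\spec(\F[y_1,y_2])$ is prime with contraction $(f)$, and $\K$ is a field of definition of $Q$. This is exactly where regularity enters: since $\K\subseteq \K(P)$ is regular, $\F\otimes_\K \K(P)$ is a domain, and the subring $\F\otimes_\K \bigl(\K[y_1,y_2]/(f)\bigr)=\F[y_1,y_2]/(f)$ inherits this property, so $f$ stays irreducible over $\F$. With $Q$ constructed, the desired map is the unique extension to fraction fields of the $\F$-algebra homomorphism $[x_i]\mapsto [y_1]$, $\theta\mapsto [y_2]$; call it $\phi:\F(P)\to \F(Q)$. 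The conditions of Definition~\ref{defn:K-birational} are then immediate: $\phi|_\F=\mathrm{id}$ by construction; $\phi([x_j])\in \K(Q)$ for every $j$ because $[x_j]\in \K(P)=\K([x_i],\theta)$; and $\phi^{-1}([y_1])=[x_i]$, $\phi^{-1}([y_2])=\theta$ both lie in $\K(P)$.

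The main obstacle I anticipate is precisely the absolute irreducibility of $f$: \emph{a priori} an irreducible polynomial over $\K$ might split upon passing to $\F$, which would wreck the construction of the plane curve. The appeal to regularity, packaged by Proposition~\ref{prop:def_field} as the statement that $P^c$ is absolutely prime, is exactly what rules this out. Everything else (transcendence degree, primitive element, verification of the $\K$-birationality conditions) is bookkeeping about function fields and tensor products.
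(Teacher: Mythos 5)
Your proof is correct and follows essentially the same route as the paper: choose a transcendental coordinate, apply the primitive element theorem to $\K([x_i])\subseteq \K(P)$, and take the (denominator-cleared) minimal polynomial $f$ as the plane model, with the $\K$-isomorphism defined on generators. The only difference is that you spell out the absolute irreducibility of $f$ via regularity of $\K\subseteq\K(P)$, a point the paper's proof passes over with ``Clearly $Q$ is defined over $\K$''.
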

\begin{proof}
We write the classical proof for completeness. Since $P$ defines a curve,
$\K(P)$ is of transcendence degree 1 over $\K$. Let $x=g_1([x_1],\ldots,
[x_N])\in P$ be transcendent over $\K$. Then $\K(x) \subseteq \K(P)$ is algebraic
and finite, so there is an element $y=g_2([x_1],\ldots, [x_N])$ algebraic over
$\K(x)$ such that $\K(x,y)=\K(P)$. Taking the minimal polynomial of $y$ over
$\K(x)$ and clearing denominators, we get an equation $f(x,y)$ such that $Frac
\K[x,y]/(f(x,y))\cong \K(P)$. Let $Q$ be the curve defined by $f(x,y)$. Clearly
$Q$ is defined over $\K$ and the isomorphism
\[\begin{matrix}\phi: &\F(Q)&\rightarrow &\F(P)\\
  &[x]&\mapsto&g_1([x_1],\ldots, [x_N])\\
  &[y]&\mapsto&g_2([x_1],\ldots, [x_N])\\
  \end{matrix}
\]
is a $\K$-isomorphism.
\end{proof}

\begin{theorem}[Hilbert-Hurwitz]\label{teo:Hilbert-Hurwitz}
Let $P=(f(x,y))\in \spec(\F[x,y])$ define a rational curve defined over $\K$,
$\deg(f)=d>2$, $f(x,y)\in \K[x,y]$. Then the curve defined by $P$ is
$\K$-birational to a curve of degree $d-2$ defined over $\K$.
\end{theorem}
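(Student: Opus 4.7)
The plan is to adapt the classical adjoint-curve construction, checking Galois descent so that the resulting plane curve is defined over $\K$. Let $f\in\K[x,y]$ define a rational curve $C=V(f)$ of degree $d>2$, and let $\Sigma$ denote the set of (possibly infinitely near) singular points of $C$ after successive blow-up of $\mathbb{P}^2$, with multiplicity function $p\mapsto m_p$. Since $f$ has coefficients in $\K$, both $\Sigma$ and the function $p\mapsto m_p$ are stable under $\mathrm{Gal}(\F/\K)$. The adjoint system $V\subseteq\K[x,y]_{\leq d-2}$, consisting of polynomials of degree at most $d-2$ that vanish at each $p\in\Sigma$ with multiplicity at least $m_p-1$, is cut out by Galois-invariant linear conditions and is therefore generated by polynomials over $\K$.

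Counting dimensions, one has $\dim_\K V\geq\binom{d}{2}-\sum_p\binom{m_p}{2}$, and for a rational plane curve the genus formula forces $\sum_p\binom{m_p}{2}=\binom{d-1}{2}$. Thus $\dim_\K V\geq d-1$, and the projective adjoint system has $\K$-dimension at least $d-2\geq 1$. Pick three $\K$-linearly independent adjoints $g_0,g_1,g_2\in V$ in sufficiently general position (when $d=3$ the system has projective dimension $1$, and one takes $g_0,g_1$ together with a generic linear form), and consider the $\K$-rational map
\[
\varphi:C\dashrightarrow \mathbb{P}^2,\qquad (x,y)\mapsto(g_0(x,y):g_1(x,y):g_2(x,y)).
\]
Eliminating $x$ and $y$ from $f=0$, $ug_1-vg_0=0$, $ug_2-wg_0=0$ produces a polynomial $f'\in\K[u,v,w]$ whose vanishing set is the Zariski closure of $\varphi(C)$.

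To compute $\deg V(f')$, apply B\'ezout to a generic adjoint of degree $d-2$: it meets $C$ in $d(d-2)$ points with multiplicity, of which $\Sigma$ absorbs $\sum_p m_p(m_p-1)=(d-1)(d-2)$. The moving part thus has degree $d-2$, so the pullback by $\varphi$ of a generic line in $\mathbb{P}^2$ cuts a divisor of degree $d-2$ on $C$; equivalently $\deg(\varphi)\cdot\deg V(f')=d-2$. The main obstacle is verifying that $\varphi$ is birational onto its image, which then forces $\deg V(f')=d-2$. For a sufficiently generic triple $g_0,g_1,g_2$, the pullback linear system on the rational normalization of $C$ is base-point-free of projective dimension $2$ and separates a generic pair of points; because $\K$ is infinite in characteristic zero, such a generic choice can be made over $\K$. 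Dehomogenizing $f'$ then yields an affine plane curve of degree $d-2$ defined over $\K$ that is $\K$-birational to $C$ via $\varphi$.
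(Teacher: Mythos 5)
The paper itself offers no argument for this theorem---its ``proof'' is the citation to Hilbert--Hurwitz and Sendra--Winkler---and your adjoint-system construction with Galois descent is exactly the classical route those references take. For $d\geq 4$ your write-up is essentially a correct expansion of it: the adjoint conditions (including those at infinitely near points) are permuted by $\mathrm{Gal}(\F/\K)$, so the system is defined over $\K$; the genus-zero identity $\sum_p\binom{m_p}{2}=\binom{d-1}{2}$ gives projective dimension $\geq d-2$ and moving degree exactly $d-2$ (so the system is the complete $|\mathcal{O}_{\mathbb{P}^1}(d-2)|$ on the normalization); and a generic net inside it is available over $\K$ because the locus of good nets is a nonempty Zariski-open subset of a Grassmannian defined over $\K$ and $\K$ is infinite. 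One minor point: since the singular points may lie on the line at infinity, the vanishing conditions should be imposed on the homogenizations of the degree-$\leq d-2$ polynomials.

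The genuine gap is the case $d=3$, which the statement includes. There the adjoint system is the pencil of lines through the double point, and your fix---adjoining a \emph{generic} linear form $g_2$---destroys the degree count: a generic member $ug_0+vg_1+wg_2$ with $w\neq 0$ no longer vanishes at the singular point, so nothing is absorbed there and the moving part has degree $3$, not $1$. Indeed, for $d=3$ your $g_0,g_1,g_2$ are three independent linear forms, so $(g_0:g_1:g_2)$ is merely a projectivity of $\mathbb{P}^2$ and the image is again a cubic. The correct treatment is to use the pencil alone: $(x,y)\mapsto(g_0:g_1)$ is the projection from the double point, which is birational onto $\mathbb{P}^1$, realized as a line (degree $d-2=1$) in the plane via, say, $(g_0:g_1:0)$; this is still defined over $\K$. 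With that repair the argument is complete.
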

\begin{proof}
\cite{Hilbert-Hurwitz,SW1}
\end{proof}

\section{The number of fields of parametrization}\label{sec:number_of_fileds}
By the previous results, every rational curve $\C$ defined over $\K$ is
$\K$-birational to a plane curve $\C'$ of degree at most 2. If $\C'$ is a line
or a conic that is $\K$-parametrizable, then $\C$ is $\K$-parametrizable. More
generally, if $\L$ is a field of parametrization of $C'$ and $\K\subseteq \L$
then $\L$ is a field of parametrization of $\C$. Using standard projective
transformations, we can reduce the problem of deciding if $\C$ is parametrizable
over $\K$ to a plane conic defined by a polynomial of the form $(ax^2+by^2+c)$.
In the general case, one would use the primitive element theorem to find a plane
curve birational to the given curve and then use Hilbert-Hurwitz to compute a
conic birational to it. It is worthy to comment that we may change the field of
definition of the curve during this process. For example, the curve defined by
$(x-\sqrt{2})\subseteq \Q(\sqrt{2})[x,y]$ is defined over $\Q(\sqrt{2})$ and it
is $\Q(\sqrt{2})$-birational with $(0)\subseteq \Q[x]$. In this case the field
of definition of the second variety is $\Q$ and not $\Q(\sqrt{2})$. So, we have
to be flexible with what fields of definition we are dealing with and not work
only with the field of definition of the original curve. But, in order to avoid
cases such as Example~\ref{example:varios_casos}, we cannot take too big fields.
To solve this problem, we will restrict to fields of definition $\K$ that are
finitely generated over $\Q$ (as a field) and are not fields of parametrization.
We may suppose, without loss of generality, that
$\mathbb{K}=\mathbb{Q}(y_1,\ldots,y_u)(\beta)$, where $y_1,\ldots, y_u$ are
indeterminates and $\beta$ is algebraic of degree $v$ over
$\mathbb{Q}(y_1,\ldots, y_u)$. Let $\mathbb{E}$ be the integral closure of the
ring $R=\mathbb{Z}[y_1,\ldots,y_u]$ in $\mathbb{K}$. Without loss of generality,
we may suppose that $\beta$ is integral over $R$ and that $\beta= \beta_1,
\ldots, \beta_v$ are the conjugates of $\beta$ in $\F$. First, we need some
elementary properties of these rings.

\begin{lemma}
$\mathbb{E}$ is a finitely generated $R$-module.
\end{lemma}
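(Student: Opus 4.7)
The plan is to recognize this as the classical ``finiteness of integral closure'' statement for a Noetherian normal domain in a finite separable extension, and to carry out the standard trace-pairing argument. First I would verify that $R=\mathbb{Z}[y_1,\ldots,y_u]$ is Noetherian (Hilbert basis) and integrally closed in its fraction field $F=\Q(y_1,\ldots,y_u)$ (since $R$ is a UFD). Since the characteristic is zero, the finite extension $F\subseteq \K=F(\beta)$ is automatically separable, of degree $v$.

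Next I would exploit the non-degeneracy of the trace form $T(x,y)=\mathrm{Tr}_{\K/F}(xy)$. Starting from the basis $1,\beta,\ldots,\beta^{v-1}$ of $\K/F$ (whose elements lie in $\E$, because $\beta$ was assumed integral over $R$), I would form the dual basis $e_1^{*},\ldots,e_v^{*}\in \K$ with respect to $T$. For any $x\in\E$ we can expand
\[
x=\sum_{i=1}^{v} a_i\, e_i^{*},\qquad a_i = \mathrm{Tr}_{\K/F}(x\,\beta^{i-1}).
\]
Each product $x\beta^{i-1}$ is integral over $R$, so all its Galois conjugates over $F$ are integral over $R$ as well, hence $a_i$ is an element of $F$ that is integral over $R$; as $R$ is integrally closed, $a_i\in R$. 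Therefore
\[
\E\subseteq R\,e_1^{*}+\cdots+R\,e_v^{*},
\]
i.e.\ $\E$ is contained in a finitely generated $R$-module. Noetherianity of $R$ then forces $\E$ itself to be finitely generated as an $R$-module.

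The only real subtlety is justifying that the $e_i^{*}$ exist and lie in $\K$: this is exactly the statement that the trace form of a finite separable extension is non-degenerate, and is standard. The argument that the coordinates $a_i$ lie in $R$ (rather than merely in $F$) is the point where both separability and the normality of $R$ are used, and is the only step that genuinely requires invoking a non-formal property of the setup. Everything else is bookkeeping, so I expect no real obstacle.
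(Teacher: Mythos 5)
Your proof is correct and is essentially the same classical argument the paper gives: both rest on the facts that conjugates of integral elements are integral, that the resulting symmetric expressions land in $F=\Q(y_1,\ldots,y_u)$, and that $R$ is a Noetherian UFD, hence integrally closed, so one can trap $\E$ inside a finitely generated $R$-module and invoke Noetherianity. The only difference is cosmetic: the paper multiplies the vector of conjugates by the adjugate of the Vandermonde-type matrix to show $D\,a_i\in R$ with $D=\disc(\beta)$, obtaining $\E\subseteq\langle 1/D,\ldots,\beta^{v-1}/D\rangle$, whereas you bound $\E$ by the $R$-span of the trace-dual basis of $1,\beta,\ldots,\beta^{v-1}$.
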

\begin{proof}
This is a well known result that follows from the fact that the discriminant $D$
of $\beta$ is a universal denominator for the ring of integers,
$D\mathbb{E}\subseteq <1,\beta,\ldots,\beta^{v-1}>$ (See
\cite{Integral-closure-of-noetherian-rings}). We know that $\{1, \beta, \beta^2,
\ldots, \beta^{v-1}\}$ is a base of the $\mathbb{Q}(y_1,\ldots,y_u)$-vectorial
space $\mathbb{K}$. Let $\mathbb{D}$ be the integral closure of $R$ in $\F$.
Take $a\in \mathbb{E}$, then, $a=a_0 + a_1\beta+\ldots +a_{v-1}\beta^{v-1}$,
$a_i\in\mathbb{Q}(y_1,\ldots,y_u)$. Let $a=b_1,\ldots,b_v$ be the conjugates of
$a$ in $\mathbb{D}$ defined by:
\[\begin{pmatrix} b_1\\ \vdots\\b_v\end{pmatrix}=\begin{pmatrix}
1&\beta&\ldots&\beta^{v-1}\\
\multicolumn{4}{c}{\cdots\cdots}\\
1&\beta_v&\ldots&\beta_v^{v-1}\\
\end{pmatrix}
\begin{pmatrix}a_0\\ \vdots\\ a_{v-1}\end{pmatrix}
\]
with coefficient matrix $B$. Multiplying by the adjoint matrix (whose elements
are in $\mathbb{D}$, that is, are integral elements) and $det(B)$ we obtain:
\[det(B)adj(B)\begin{pmatrix} b_1\\ \vdots\\b_v\end{pmatrix}=det(B)^2
\begin{pmatrix}a_0\\ \vdots\\ a_{v-1}\end{pmatrix}
\]
The left part of the equation belongs to $\mathbb{D}$. Note that
$D=det(B)^2=\disc(\beta)\in R$ belongs to the base ring. So, every $Da_i$ is
integral over $R$ and belongs to $\mathbb{Q}(y_1,\ldots,y_u)$. Since $R$ is a
UFD, it is integrally closed in its field of fractions, so $Da_i\in R$.

To sum up, if $a=a_0+a_1\beta+\ldots+a_{v-1}\beta^{v-1}\in\mathbb{E}$, then
$Da_i\in R, 0\leq i\leq v-1$. So $\mathbb{E}\subseteq
<1/D,\beta/D,\ldots,\beta^{v-1}/D>\subseteq\mathbb{K}$ a finitely generated
$R$-module. Since $R$ is noetherian, $\mathbb{E}$ is also finitely generated.
\end{proof}

Now, we define a multiplicative function in order to have some control on
divisibility in $\mathbb{E}$.

\begin{defn}\label{defn:norm}
Consider the map $N_1: \mathbb{Z}[y_1,\ldots, y_u]\rightarrow \mathbb{N}$
defined by:
\[N_1(x)=\left\{\begin{matrix}|x|& \textrm{\ if\ } x\in \mathbb{Z}\\
2& \textrm{\ if\ }x\notin \mathbb{Z}, x\textrm{\ irreducible}\\
2^k|c| &\textrm{\ if\ } x=cf_1\ldots f_k, c\in \mathbb{Z},
f_i\notin\mathbb{Z}\textrm{\ irreducible}
\end{matrix}\right.\]
$N_1$ is a multiplicative map such that $N_1(x)=0\leftrightarrow x=0$,
$N_1(x)=1$ if and only if $x\in\{1,-1\}$.

Define $N_2:\mathbb{E}\rightarrow \mathbb{Z}[y_1,\ldots,y_u]$ as the restriction
of the norm associated to the algebraic extension $\mathbb{Q}(y_1,\ldots,
y_u)\subseteq \mathbb{K}$. Let $N=N_1\circ N_2:\mathbb{E}\rightarrow
\mathbb{N}$.
\end{defn}

\begin{lemma}\label{lem:norma}In the previous conditions:
\begin{itemize}
\item $N(ab)=N(a)N(b)$, $N$ is multiplicative.
\item $N(x)=0$ if and only if $x=0$.
\item $N(x)=1$ if and only if $x$ is a unit in $\mathbb{E}$.
\item If $A,B\in \mathbb{E}^*$ and $N(A)\nmid N(B)$ then $A\nmid B$.
\end{itemize}
\end{lemma}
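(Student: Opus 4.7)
The plan is to verify each of the four bullets in turn, using the fact that $N_2$ is the standard field norm for the finite extension $\mathbb{Q}(y_1,\ldots,y_u)\subseteq \mathbb{K}$ restricted to $\mathbb{E}$, together with the ad hoc multiplicative function $N_1$ on the UFD $R=\mathbb{Z}[y_1,\ldots,y_u]$. The first order of business is to check that $N_2$ actually lands in $R$ so that $N=N_1\circ N_2$ is well-defined: if $a\in \mathbb{E}$ then $N_2(a)=\prod_{i=1}^v \sigma_i(a)$, where $\sigma_i$ ranges over the embeddings of $\mathbb{K}$ into $\F$ fixing $\mathbb{Q}(y_1,\ldots,y_u)$; since each $\sigma_i(a)$ is integral over $R$, the product lies in the integral closure of $R$ in $\mathbb{Q}(y_1,\ldots,y_u)$, which equals $R$ because $R$ is a UFD.

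Multiplicativity of $N$ then follows immediately: $N_2$ is multiplicative because it is a field norm, and $N_1$ is multiplicative by construction on the UFD $R$ (it is defined on irreducibles and extended multiplicatively, so it is really the composition of the absolute value on the constant content with $2^{(\text{number of non-constant irreducible factors})}$). The statement $N(x)=0\iff x=0$ is clear: $N_2(x)=0$ iff $x=0$ since $N_2$ is a field norm, and $N_1(y)=0$ iff $y=0$ by inspection.

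For the unit characterization, one direction is immediate from multiplicativity: if $xy=1$ in $\mathbb{E}$ then $N(x)N(y)=N(1)=1$ in $\mathbb{N}$, forcing $N(x)=1$. For the converse, suppose $N(x)=1$. Then $N_1(N_2(x))=1$, which by definition of $N_1$ forces $N_2(x)\in\{1,-1\}$. Writing $N_2(x)=x\cdot\prod_{i=2}^v\sigma_i(x)=\pm 1$, we get $x^{-1}=\pm\prod_{i=2}^v\sigma_i(x)$, which is integral over $R$ and lies in $\mathbb{K}$, hence in $\mathbb{E}$ because $\mathbb{E}$ is by definition the integral closure of $R$ in $\mathbb{K}$. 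So $x$ is a unit. The last bullet is then a formal consequence: if $A\mid B$ in $\mathbb{E}$, write $B=AC$ with $C\in \mathbb{E}$, take $N$ of both sides and use multiplicativity to get $N(A)\mid N(B)$ in $\mathbb{N}$; contraposition gives the claim.

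The only real subtlety is the unit direction, because we rely on the concrete description of $\mathbb{E}$ as the integral closure of $R$ in $\mathbb{K}$; the rest is bookkeeping around the definition of $N_1$ and the standard properties of the field norm. I do not expect a true obstacle, but the cleanest presentation needs to be explicit about the fact that $R$ is integrally closed in $\mathbb{Q}(y_1,\ldots,y_u)$ (used both to place $N_2(a)$ in $R$ and implicitly to apply $N_1$).
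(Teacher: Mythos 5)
Your proof is correct and follows essentially the same route as the paper: multiplicativity of $N=N_1\circ N_2$, the zero and unit characterizations via the norm, and the divisibility claim by contraposition. The only (minor) divergence is in the converse of the unit statement, where you invert $x$ as $\pm\prod_{i\geq 2}\sigma_i(x)$ and invoke integral closedness of $\mathbb{E}$ in $\K$, whereas the paper reads off the constant term $\pm 1$ of the minimal polynomial of $x$ over $R$ and exhibits the inverse explicitly; both hinge on the same fact and your preliminary check that $N_2$ lands in $R$ (via $R$ being integrally closed as a UFD) is a worthwhile point the paper leaves implicit.
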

\begin{proof}
$N$ is the composition of multiplicative maps, so it is multiplicative. If
$x=0$, then it is clear that $N(x)=0$. If $N(x)=0$, then $N_1(N_2(x))=0$, so
$N_2(x)=0$ and $x=0$. If $x$ is a unit in $\mathbb{E}$ then
$1=N(1)=N(xx^{-1})=N(x)N(x^{-1})$ so $N(x)$ is a natural number that divides
$1$, it has to be $1$. Reciprocally, if $N(x)=1$, then $N_2(x)=\pm 1$. The
minimal polynomial of $x$ over $\mathbb{Z}[y_1, \ldots, y_u]$ is of the form
$x^s +a_1x^{s-1}+ \ldots +a_{s-1}x \pm 1$, $s\geq 1$. So, in $\mathbb{E}$,
$x(x^{s-1}+a_{s-1}x^{s-2}+\ldots+a_{s-1})=\mp 1$ and $x$ is a unit. The last
item follows from the multiplicativity of $N$.
\end{proof}

\begin{lemma}
Let $x\in \mathbb{E}^*$, let $p\in \mathbb{Z}$ be an integer prime such that
$p>N(x)$, then $x\notin \sqrt{p\mathbb{E}}$ and there is a maximal ideal $m_p$
in $\mathbb{E}$ such that $p\in m_p$, $x\notin m_p$.
\end{lemma}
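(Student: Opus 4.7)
The plan is to prove the two claims in sequence: the non-membership $x\notin\sqrt{p\mathbb{E}}$ will follow from a clean divisibility argument using the multiplicative norm $N$, and the existence of the desired maximal ideal will follow from the fact that $\mathbb{E}$ is a Jacobson ring.

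For the first part, I would argue by contradiction. Suppose $x\in\sqrt{p\mathbb{E}}$, so there exist $n\geq 1$ and $y\in\mathbb{E}$ with $x^n=py$, i.e.\ $p\mid x^n$ in $\mathbb{E}$. Both $p$ and $x^n$ lie in $\mathbb{E}^*$, so the contrapositive of the last item of Lemma~\ref{lem:norma} yields $N(p)\mid N(x^n)=N(x)^n$ in $\mathbb{N}$. Next compute $N(p)$: since $p\in\mathbb{Z}$ is fixed by every conjugation over $\mathbb{Q}(y_1,\ldots,y_u)$, one has $N_2(p)=p^v$, and as $p^v\in\mathbb{Z}$ the definition of $N_1$ gives $N(p)=N_1(p^v)=p^v$. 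Hence $p\mid N(x)^n$ in $\mathbb{Z}$, and $p$ being an integer prime forces $p\mid N(x)$. But $N(x)\geq 1$ (as $x\neq 0$, by Lemma~\ref{lem:norma}), so this would force $p\leq N(x)$, contradicting the hypothesis $p>N(x)$.

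For the second part, the key observation is that $\mathbb{E}$ is a Jacobson ring. Indeed, $R=\mathbb{Z}[y_1,\ldots,y_u]$ is a finitely generated $\mathbb{Z}$-algebra, hence Jacobson by the classical arithmetic Nullstellensatz, and the previous lemma showed that $\mathbb{E}$ is a finitely generated $R$-module, in particular a finitely generated $\mathbb{Z}$-algebra, so $\mathbb{E}$ is again Jacobson. In any Jacobson ring, the radical of an ideal equals the intersection of the maximal ideals containing it; applying this to $p\mathbb{E}$, since $x\notin\sqrt{p\mathbb{E}}$ by the first part, there must exist a maximal ideal $m_p$ with $p\in m_p$ and $x\notin m_p$, which is exactly what we need.

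The main obstacle is not the norm computation (which is essentially forced by the definitions) but the upgrade from ``prime ideal'' to ``maximal ideal'' in the second assertion. A purely abstract localization argument at $x$ gives a prime ideal containing $p$ that misses $x$, but not obviously a maximal one, so one has to invoke some finiteness/Jacobson input. If one prefers to avoid quoting the Jacobson property directly, one can instead work inside the localization $\mathbb{E}[1/x]$, which is still a finitely generated $\mathbb{Z}$-algebra in which $p$ is a non-unit (otherwise $x^n\in p\mathbb{E}$ for some $n$); then any maximal ideal of $\mathbb{E}[1/x]$ containing $p$ has finite residue field by the arithmetic Nullstellensatz, and its contraction to $\mathbb{E}$ is maximal and avoids $x$.
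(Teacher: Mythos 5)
Your proof is correct and follows essentially the same route as the paper: the norm computation for the first assertion is identical, and your ``alternative'' second argument (localize at $x$, take a maximal ideal of $x^{-1}\mathbb{E}$ containing $p$, and use the arithmetic Nullstellensatz to see that its contraction to $\mathbb{E}$ is maximal) is precisely the paper's own proof. Your primary phrasing via the Jacobson property of finitely generated $\mathbb{Z}$-algebras is just a black-boxed version of that same argument, which the paper unwinds by hand.
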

\begin{proof}
If $p>N(x)$, $x\neq 0$, then $p\nmid N(x)$ in $\mathbb{Z}$, so $N(p)=p^v\nmid
N(x^K)$ and $p\nmid x^K$ in $\E$. Thus $x\notin \sqrt{p\mathbb{E}}$. In this
context, consider the ring of fractions $x^{-1}\mathbb{E}$ whose denominators
are the powers of $x$. $(p)^e$ is a proper ideal in $x^{-1}\mathbb{E}$. Let $P$
be a maximal ideal of $x^{-1}\mathbb{E}$. Its contraction is a prime ideal
containing $p$ but not $x^N$. Consider the map
\[\phi:\mathbb{E}/P^c\rightarrow x^{-1}\mathbb{E}/P\]
such that $\phi(a+P^c)=a/1+P$. $\phi$ is well defined and injective. Now, the
field of the right is of characteristic $p$ and is a finitely generated
$\mathbb{F}_p$-algebra, so it has to be an algebraic extension of
$\mathbb{F}_p$. It follows that $\mathbb{E}/P^c$ is a finite domain, so a field
and $P^c$ is a maximal ideal such that $p\in P^c$, $x\notin P^c$.
\end{proof}

\begin{lemma}\label{lem:residual_nonsquares}
Let $m$ be a maximal ideal in $\mathbb{E}$ such that char$(\mathbb{E}/m)=p\neq
2$, $p\equiv 1 \mod 4$. Let $e\not\equiv 0$ mod $m$. Then, there is a $n\in
\mathbb{E}$ such that $1+en^2$ is not a square mod $m$.
\end{lemma}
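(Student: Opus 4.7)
The plan is to pass to the residue field $\F_q := \E/m$, a finite field of order $q=p^s$ by the previous lemma, and settle the question entirely there. Since the quotient map $\E \to \F_q$ is surjective, it suffices to produce $\bar n \in \F_q$ such that $1+\bar e\bar n^2$ fails to be a square in $\F_q$, where $\bar e\in\F_q^\ast$ is the reduction of $e$. The hypothesis $p\equiv 1\pmod 4$ will enter only through the fact that $-1$ is a square in $\F_q$.

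The cardinality of the image
\[T=\{1+\bar e x^2:x\in\F_q\}\]
equals $(q+1)/2$, since $\bar e\neq 0$ and the squaring map on $\F_q$ takes exactly $(q+1)/2$ values. The set $S=\{0\}\cup(\F_q^\ast)^2$ of squares also has size $(q+1)/2$. Therefore, if $T$ contained only squares, the inclusion $T\subseteq S$ would force $T=S$ by equality of sizes. The goal is to rule this out; I split on whether $\bar e$ is itself a square.

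If $\bar e$ is a non-square, then $-1/\bar e$ is a non-square (because $-1$ is a square), hence $1+\bar e x^2=0$ has no solution in $\F_q$ and $0\notin T$, contradicting $0\in S=T$. If $\bar e$ is a square, absorb a square root of $\bar e$ into $x$ and assume $\bar e=1$; the decisive remark is that $T$ is then closed under $s\mapsto s-1$, because for $s=1+x^2\in T$ one has $s-1=x^2\in S=T$. As $s\mapsto s-1$ is a bijection of $\F_q$ and $T$ is finite, $T$ is invariant under translation by every element of the prime field $\F_p$, hence is a union of additive $\F_p$-cosets. This forces $p \mid |T|=(p^s+1)/2$, which contradicts $p^s+1\equiv 1\pmod p$.

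The main obstacle is the case where $\bar e$ is a square; the naive cardinality comparison alone is inconclusive there, and the real content of the lemma is the additive-closure argument, which exploits the particular form $1+x^2$ together with $p\mid |T|$ being impossible.
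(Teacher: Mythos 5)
Your proof is correct and follows essentially the same route as the paper's: both arguments show that if every $1+\bar e x^2$ were a square then the image would be exactly the set of squares by a cardinality/injectivity count, use $p\equiv 1\pmod 4$ (so $-1$ is a square) to reduce to the case $\bar e=1$, and then derive a contradiction from the set of squares being a union of additive $\mathbb{F}_p$-cosets, forcing $p\mid (p^s+1)/2$. The only cosmetic difference is that you split into cases on whether $\bar e$ is a square, while the paper deduces that $\bar e$ must be a square from $0$ lying in the image.
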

\begin{proof}
$\mathbb{E}/m\cong \mathbb{F}_{p^q}$, the field of $p^q$ elements. Suppose that
$1+en^2$ is always a square mod $m$. Then $[1+ex]$ is a bijection of
$\mathbb{F}_{p^q}$ such that maps the squares into themselves. It follows that
it is a bijection among the squares. In particular, there is a $[n]$ such that
$[1+en^2]=[0]$ a square. As $p\equiv 1\mod 4$, $[-1]$ is a square in
$\mathbb{F}_{p^q}$. So $[e]^{-1}=[-n^2]$ is a square and also $[e]$ is a square.
Thus, $[1+x]$ is a bijection on the set of squares. But this is impossible. If
this where the case, every element of $\mathbb{F}_p$ would be a square and $[x]$
is a square if and only if $[x+i]$ is a square for every $i\in \mathbb{F}_p$. In
$\mathbb{F}_{p^q}$ there are $\frac{p^q+1}{2}$ squares. The equivalence relation
$a\sim b\leftrightarrow a-b\in\mathbb{F}_p$ produces a partition of
$\mathbb{F}_{p^q}$ into equivalence classes, each one containing exactly $p$
elements. In a equivalence class, either every element is a square or none is.
Hence, it follows that the number of squares is divisible by $p$, while
$(p^r+1)/2$ is not.
\end{proof}

Note that depending on the maximal $m$, it may happen that necessarily $n\notin
\mathbb{Z}$ in the Lemma. In order to prove that $\C$ has infinitely many fields
of parametrization, we suppose without loss of generality that $\C$ is a plane
conic defined by the polynomial $ax^2+by^2+c$, where $a,b,c\in \mathbb{E}^*$.
Suppose that the conic does not have points with coefficients in $\mathbb{K}$.
Let $n\in \mathbb{E}$ be such that $a+bn^2\neq 0$. The intersection of the conic
and the line $y=nx$ provides the points
\[\left(\pm\sqrt{\frac{-c}{a+bn^2}},\pm n\sqrt{\frac{-c}{a+bn^2}}\right)\]
that have coefficients in $\mathbb{K}(\sqrt{\frac{-c}{a+bn^2}})$. This is a
field of parametrization of $\C$.

\begin{lemma}\label{lem:distinct_fields}
Let $n\neq m\in \mathbb{K}^*$, such than $a+bn^2\neq 0\neq a+bm^2$. Then,
$\mathbb{K}(\sqrt{\frac{-c}{a+bn^2}})=\mathbb{K}(\sqrt{\frac{-c}{a+bm^2}})$ if
and only if $\frac{a+bn^2}{a+bm^2}$ is a square in $\mathbb{K}$.
\end{lemma}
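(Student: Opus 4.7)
The plan is to establish the standard characterization of when two simple quadratic extensions of $\K$ coincide. By the context preceding the lemma, the conic $ax^2+by^2+c$ has no $\K$-point, so neither $\frac{-c}{a+bn^2}$ nor $\frac{-c}{a+bm^2}$ is a square in $\K$: otherwise, say $\frac{-c}{a+bn^2}=s^2$ would yield the $\K$-point $(s,ns)$. Hence the two fields in question are genuine quadratic extensions of $\K$.

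For the easy direction ($\Leftarrow$), I would suppose $\frac{a+bn^2}{a+bm^2}=s^2$ with $s\in\K^*$ and observe that
\[\frac{-c}{a+bm^2}=s^{2}\cdot\frac{-c}{a+bn^2},\]
so a square root of the left-hand side is $\pm s$ times a square root of the right-hand side, living inside $\K(\sqrt{-c/(a+bn^2)})$. The reverse inclusion is symmetric, giving equality of the two fields.

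For the harder direction ($\Rightarrow$), write $\alpha=\sqrt{-c/(a+bn^2)}$ and $\beta=\sqrt{-c/(a+bm^2)}$ and assume $\K(\alpha)=\K(\beta)$. Since $\{1,\alpha\}$ is a $\K$-basis of this field, I expand $\beta=p+q\alpha$ with $p,q\in\K$ and square both sides to obtain
\[\frac{-c}{a+bm^2}=p^{2}+q^{2}\cdot\frac{-c}{a+bn^2}+2pq\,\alpha.\]
Because $\alpha\notin\K$, linear independence of $\{1,\alpha\}$ forces $pq=0$. The case $q=0$ is ruled out by $\beta\notin\K$, so $p=0$. Comparing the rational parts then yields $\frac{a+bn^2}{a+bm^2}=q^{2}$, a square in $\K$, as desired.

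The only mildly subtle point is justifying at the outset that both $\alpha$ and $\beta$ are actually irrational over $\K$, which is exactly where the standing assumption (no $\K$-points on the conic) is used; once this is in place the argument is a routine computation with a $\K$-basis of the quadratic extension.
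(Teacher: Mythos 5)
Your proof is correct: the paper itself dismisses this lemma with the single word ``Straightforward,'' and your argument (reducing to the standard fact that $\K(\sqrt{u})=\K(\sqrt{v})$ iff $u/v$ is a square, after using the no-$\K$-point hypothesis to ensure both square roots are genuinely irrational) is exactly the routine computation the author presumably had in mind.
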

\begin{proof}
Straightforward.
\end{proof}

\begin{theorem}\label{teo:infinito-set}
In the previous conditions. There is an infinite set $S\subseteq \mathbb{E}$
such that for all $p\neq q\in S$, $a+bp^2\neq 0$, $a+bq^2\neq 0$ and
$\frac{a+bp^2}{a+bq^2}$ is not a square in $\mathbb{K}$.
\end{theorem}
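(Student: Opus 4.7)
The plan is to build $S = \{p_1, p_2, \ldots\}$ recursively. Pick $p_1 \in \E$ arbitrary with $a + bp_1^2 \neq 0$, and assume inductively that $p_1,\ldots,p_k$ have been chosen so that the theorem's conclusion holds on this finite set. Write $d_i := a + bp_i^2 \in \E^*$. To produce $p_{k+1}$, it suffices to find $n \in \E$ with $a+bn^2 \neq 0$ and such that $(a+bn^2)d_i$ is not a square in $\K$ for every $i \leq k$; indeed, $(a+bn^2)/d_i = (a+bn^2)d_i/d_i^2$ is a square iff $(a+bn^2)d_i$ is, so setting $p_{k+1} = n$ extends $S$ correctly (and $n \neq p_i$ is automatic, since $n = p_i$ would make the ratio equal $1$).

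The central reduction is that squares of $\K$ lying in $\E$ are already squares in $\E$: if $y \in \K$ satisfies $y^2 = z \in \E$, then $y$ is integral over $\E$, hence over $R$, and since $\E$ is the integral closure of $R$ in $\K$, we get $y \in \E$. It therefore suffices to exhibit, for each $i$, a maximal ideal $m_i$ of $\E$ such that $(a+bn^2)d_i$ is a nonzero non-square modulo $m_i$; a nonzero reduction also guarantees $a + bn^2 \neq 0$.

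For each $i \leq k$, I would first choose $m_i$ and only then choose $n$. Using Dirichlet's theorem together with the previous lemma, which produces, for any nonzero $x \in \E$ and any sufficiently large rational prime $\ell$, a maximal ideal $m$ of $\E$ of residual characteristic $\ell$ avoiding $x$, one selects pairwise distinct primes $\ell_i \equiv 1 \pmod 4$ with $\ell_i > N(abd_i)$ and corresponding maximal ideals $m_i$ with $a,b,d_i \notin m_i$. In the residue field, decompose $\bar a + \bar b \bar n^2 = \bar a\bigl(1 + (\bar b/\bar a)\bar n^2\bigr)$. Lemma~\ref{lem:residual_nonsquares}, applied to a lift $e \in \E$ of $\bar b/\bar a$, produces $\tilde n \in \E$ with $1 + e\tilde n^2$ a non-square modulo $m_i$; combined with the value $\bar a$ obtained at $\bar n = 0$, this realizes both a square and a non-square value of $\bar a + \bar b \bar n^2$. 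A short case split on the square-classes of $\bar a$ and $\bar d_i$ then selects $n_i \in \E$ such that $(\bar a + \bar b \bar n_i^2)\bar d_i$ is a nonzero non-square in $\E/m_i$.

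Finally, because $m_1,\ldots,m_k$ are distinct maximal ideals and hence pairwise comaximal, the Chinese Remainder Theorem provides $n \in \E$ with $n \equiv n_i \pmod{m_i}$ for every $i$. Then $(a+bn^2)d_i \equiv (\bar a + \bar b \bar n_i^2)\bar d_i \pmod{m_i}$ is a nonzero non-square, so by the reduction above $(a+bn^2)d_i$ is not a square in $\K$, completing the inductive step. The main obstacle is the residue-field case analysis: Lemma~\ref{lem:residual_nonsquares} only asserts existence of a \emph{non-square} value of $1 + en^2$, so one must combine it with the evaluation at $\bar n = 0$ (which returns $\bar a$) in order to realize both square-classes of $\bar a + \bar b \bar n^2$ and thereby handle the square-class of $\bar d_i$ whichever it is.
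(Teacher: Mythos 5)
Your proof is correct, and it shares the skeleton of the paper's argument: reduce modulo well-chosen maximal ideals of $\E$ whose residue characteristic is a prime $\equiv 1 \bmod 4$ exceeding the relevant norms, apply Lemma~\ref{lem:residual_nonsquares}, glue the local choices together with the Chinese Remainder Theorem, and use integral closedness to promote ``non-square modulo a maximal ideal'' to ``non-square in $\K$''. The organization differs in one genuine way. The paper fixes its maximal ideals in advance, one per future element of $S$, and imposes the extra congruence $n_{i+1}\equiv 0 \pmod{m_{i+1}}$; this normalization forces $a+bn_j^2\equiv a \pmod{m_j}$, so every later ratio reduces modulo $m_j$ to exactly the shape $1+(b/a)x^2$ handled by Lemma~\ref{lem:residual_nonsquares}, and no case analysis is needed. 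You instead choose the ideals adaptively at each inductive step (requiring $\ell_i>N(abd_i)$, which depends on the elements already constructed), clear denominators by passing to the product $(a+bn^2)d_i$ rather than localizing $\E$ at $a+bn_j^2$ as the paper does, and must then run the square-class case split that combines the lemma's non-square value with the value at $\bar n=0$. Both routes are sound: the paper's normalization is slicker and yields a set $S$ whose $j$-th element lies in $m_j$ (a structural feature the paper comments on afterwards), while your version shows that the precise form $1+en^2$ in the lemma is not essential---all one needs is that $\bar a+\bar b\bar n^2$ attains both square classes---at the cost of a slightly heavier inductive step.
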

\begin{proof}
Let $P$ be an enumeration of the integer primes $p\in \mathbb{Z}$ such that
$p\equiv 1\mod 4$, $a+bp^2\neq 0$, $p>N(ab)$. The set $P$ is infinite. To each
prime $p_i$ we associate to it a maximal ideal $m_i$ such that $p\in m_i$,
$ab\notin m_i$. It is clear that all these maximal ideals are different. We are
going to construct the set $S$ inductively from $P$. Define $S_1=\{p_1\}$,
suppose that we have defined a set $S_i=\{n_1,\ldots, n_i\}$ such that:
\begin{itemize}
\item $n_j\in m_{p_j}$, $1\leq j\leq i$
\item $a+bn_j^2\neq 0$, $1\leq j\leq i$
\item If $j\neq k$, $\frac{a+bn_j^2}{a+bn_k^2}$ is not a square in
$\mathbb{K}$. 
\end{itemize}
Then, we define $S_{i+1}$ as follows. For $1\leq j\leq i$, $[a+bn_j^2]=[a]$ mod
$m_j$, so $\frac{[a+bx^2]}{[a+bn_j^2]}=[1]+[b][a]^{-1}[x^2]$ mod $m_j$. By
Lemma~\ref{lem:residual_nonsquares}, there is a $q_j\in \E$ such that
$[1]+[b][a]^{-1}[q_j^2]$ is not a square mod $m_j$. Now, by the Chinese reminder
theorem, as the ideals $m_1,\ldots, m_{i+1}$ are pairwise comaximal, there is an
element $n_{i+1}$ such that $n_{i+1}\equiv q_j\mod m_j$, $1\leq j\leq i$;
$n_{i+1}\equiv 0\mod m_{i+1}$. Then $n_{i+1}\in m_{i+1}$. As
$[1]+[b][a^{-1}][n_{i+1}^2]$ is not a square mod $m_j$, $1\leq j\leq i$ then,
any representative of this class in $\E$ is not a square in $\E$. However, we
want to prove that $\frac{a+bn_{i+1}^2}{a+bn_j^2}$ is not a square in
$\mathbb{K}$ instead.

Consider the extension of rings $\E\subseteq \E'=(a+bn_j^2)^{-1} \mathbb{E}
\subseteq \K$. Since $\E$ is integrally closed in $\K$, then $\E'$ is also
integrally closed in $\K$. So, an element $x\in \E'$ is a square in $\E'$ if and
only if it is a square in $\K$. By construction, $\frac{a+bn_{i+1}^2}{a+bn_j^2}$
is not a square in $\E' /m_j\E' $, so it is not a square in $\E'$ and hence, it
is not a square in $\K$.

We define $S_{i+1}=\{n_1,\ldots,n_{i+1}\}$.
\end{proof}

\begin{corollary}
Let $\mathcal{C}=\{ax^2+by^2+c=0\}$, $a,b,c\in \mathbb{E}^*$ be an (absolutely)
irreducible conic that has no points with coordinates in $\mathbb{K}$, then
there are infinitely many distinct quadratic fields over $\mathbb{K}$ such that
the curve has points defined over them.
\end{corollary}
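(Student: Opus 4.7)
My plan is to obtain the corollary as a direct combination of Theorem~\ref{teo:infinito-set} and Lemma~\ref{lem:distinct_fields}, together with the explicit construction of points on $\mathcal{C}$ via lines through the origin that was carried out just before Lemma~\ref{lem:distinct_fields}.

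First, I would apply Theorem~\ref{teo:infinito-set} to produce an infinite set $S \subseteq \mathbb{E}$ with the property that $a+bn^2 \neq 0$ for every $n \in S$ and, for any two distinct $n, m \in S$, the quotient $\frac{a+bn^2}{a+bm^2}$ is not a square in $\mathbb{K}$. For each $n \in S$, intersecting $\mathcal{C}$ with the line $y = nx$ yields the points $\left(\pm \sqrt{\tfrac{-c}{a+bn^2}},\, \pm n \sqrt{\tfrac{-c}{a+bn^2}}\right)$ with coordinates in $\mathbb{K}_n := \mathbb{K}\!\left(\sqrt{\tfrac{-c}{a+bn^2}}\right)$, so this field is a field over which $\mathcal{C}$ has points.

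Next, I would verify that each $\mathbb{K}_n$ is a genuine quadratic extension of $\mathbb{K}$: if $\tfrac{-c}{a+bn^2}$ were a square in $\mathbb{K}$, then the displayed point would have coordinates in $\mathbb{K}$, contradicting the hypothesis that $\mathcal{C}$ has no $\mathbb{K}$-rational points. Finally, by Lemma~\ref{lem:distinct_fields}, for $n \neq m$ in $S$ the fields $\mathbb{K}_n$ and $\mathbb{K}_m$ coincide if and only if $\frac{a+bn^2}{a+bm^2}$ is a square in $\mathbb{K}$, which is precisely what the construction of $S$ rules out. Hence $\{\mathbb{K}_n : n \in S\}$ is an infinite family of pairwise distinct quadratic extensions of $\mathbb{K}$, each of which is a field of parametrization of $\mathcal{C}$.

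There is no real obstacle left beyond this bookkeeping, since the two serious ingredients, the residue-field non-square argument powering Theorem~\ref{teo:infinito-set} and the descent-style comparison of quadratic extensions in Lemma~\ref{lem:distinct_fields}, are already in place. The only thing one must be a little careful about is the trivial verification that $\mathbb{K}_n \neq \mathbb{K}$, which uses in an essential way the assumption that $\mathcal{C}(\mathbb{K}) = \emptyset$.
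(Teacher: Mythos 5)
Your proposal is correct and follows essentially the same route as the paper: invoke Theorem~\ref{teo:infinito-set} for the infinite set $S$, use the line-intersection points to get coordinates in $\mathbb{K}\bigl(\sqrt{-c/(a+bn^2)}\bigr)$, and apply Lemma~\ref{lem:distinct_fields} to see the fields are pairwise distinct. Your explicit check that each extension is genuinely quadratic (using $\mathcal{C}(\mathbb{K})=\emptyset$) is a detail the paper leaves implicit, and is a welcome addition.
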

\begin{proof}
Let $S$ be the set defined in Theorem~\ref{teo:infinito-set}, by
Lemma~\ref{lem:distinct_fields} we have that the fields
\[\mathbb{K}\left(\sqrt{\frac{-c}{a+bn^2}}\right), n\in S\] are pairwise
distinct and $\mathcal{C}$ has points defined over them
\end{proof}

\begin{corollary}\label{cor:fields_of_parametrization}
Let $\mathbb{K}$ be a field of definition of a rational curve $\mathcal{C}$ such
that $\K$ is finitely generated over $\Q$. If $\mathcal{C}$ is not
parametrizable over $\mathbb{K}$, then $\mathcal{C}$ has infinitely many
distinct fields of parametrization that are quadratic over $\mathbb{K}$. In
particular, if $\C$ is not parametrizable over its minimum field of definition
$k$, then there are infinitely many minimal fields of parametrization that are
quadratic over $k$.
\end{corollary}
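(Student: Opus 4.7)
The plan is to reduce the general statement to the plane conic case already handled in the previous corollary, and then transport the quadratic fields of parametrization obtained there back to $\C$ using a $\K$-birational equivalence.

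First, I would put $\C$ into a convenient model. By the earlier proposition on plane models, $\C$ is $\K$-birational to a plane curve defined over $\K$; if the defining polynomial has degree $>2$, Theorem~\ref{teo:Hilbert-Hurwitz} replaces it by a $\K$-birational plane curve over $\K$ of degree two less, and iterating I arrive at a curve $\C'$ cut out by some $g \in \K[x,y]$ of degree at most $2$ that is still $\K$-birational to $\C$. Since $\K$-parametrizability is invariant under $\K$-birational equivalence, $\C'$ cannot be $\K$-parametrizable: this rules out $\deg g \leq 1$ (every line is $\K$-parametrizable) and forbids $\C'$ from carrying any $\K$-rational point (projection from such a point would $\K$-parametrize an irreducible conic). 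In characteristic zero a linear change of variables over $\K$ diagonalizes $g$ to the form $ax^2 + by^2 + c$; the obstructions just listed force $a,b,c \in \K^*$, and scaling by a common denominator I may further assume $a,b,c \in \mathbb{E}^*$.

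At this point the previous corollary applies directly and delivers infinitely many pairwise distinct quadratic extensions of the form $\K(\sqrt{-c/(a+bn^2)})$, each a field of parametrization of $\C'$. To move this conclusion back to $\C$, I compose a fixed $\K$-isomorphism $\F(\C) \cong \F(\C')$ with any such parametrization: a $\K$-isomorphism fixes $\F$ pointwise and hence fixes every intermediate field $\L \supseteq \K$, so the composition is an $\L$-isomorphism $\F(\C) \cong \F(t)$, proving $\L$ parametrizes $\C$. For the ``in particular'' clause I specialize to $\K = k$: any quadratic extension $k(\eta)$ produced this way is automatically a minimal field of parametrization of $\C$, since its only proper subfield containing $k$ is $k$ itself, which by hypothesis does not parametrize $\C$.

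The main obstacle is bookkeeping rather than mathematics: one must verify that every step of the reduction (plane model, repeated Hilbert--Hurwitz, diagonalization, scaling into $\mathbb{E}$) simultaneously preserves $\K$-birationality and the failure of $\K$-parametrizability, so that the hypotheses of the previous corollary are genuinely satisfied for the conic $\C'$ and the conclusion transports back to $\C$ without loss.
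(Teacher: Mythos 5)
Your proposal is correct and takes essentially the same route as the paper: the paper gives no separate proof of this corollary because the reduction you describe (plane model via the primitive element argument, iterated Hilbert--Hurwitz, projective diagonalization to a conic $ax^2+by^2+c$ with $a,b,c\in\mathbb{E}^*$, and the observation that a $\K$-rational point would force $\K$-parametrizability) is exactly the prose at the start of Section~\ref{sec:number_of_fileds}, after which the preceding corollary supplies the infinitely many quadratic fields $\K(\sqrt{-c/(a+bn^2)})$. Your transport of the conclusion back to $\C$ through the $\K$-birational equivalence and the minimality observation for quadratic extensions of $k$ are the intended (implicit) closing steps.
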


This Corollary is equivalent to the following fact. Let $\mathcal{C}$ be a genus
zero curve defined over $\K$. Suppose that $\mathcal{C}$ is not parametrizable
over $\K$. Then, for any field of parametrization $\K(\alpha)$, where $\alpha$
is algebraic over $\mathbb{K}$, there is a quadratic element $\gamma$ over
$\mathbb{K}$ such that $\mathbb{K}(\gamma)$ is a field of parametrization of
$\mathcal{C}$ and $\mathbb{K}(\alpha)\cap \mathbb{K}(\gamma)=\mathbb{K}$.

The main disadvantage of the method in Theorem~\ref{teo:infinito-set} is that it
is not constructive, because of the use of the maximal ideals $m_i$. We can give
a constructive proof of the theorem whenever our base ring $R$ is the ring of
integers $\mathbb{Z}$ and $\mathbb{K}=\mathbb{Q}(\alpha)$ is an algebraic number
field. In this case, the set $S$ can be taken as a set of integer primes.

\begin{theorem}\label{teo:conjunto_infinito_S}
Given $a,b\in \mathbb{Z}^*$, there is an infinite set $S$ such that: every
element in $S$ is a prime $p\equiv 1 \mod 4$, $p\nmid  ab$, $a+bp^2\neq 0$ and,
if $p,q\in S$, then \[f(p,q)=\frac{a+bp^2}{a+bq^2}\] is not a square in
$\mathbb{Q}$.
\end{theorem}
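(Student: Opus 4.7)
The plan is to emulate the inductive construction in the proof of Theorem~\ref{teo:infinito-set}, replacing the abstract maximal ideals $m_j$ by the principal ideals $(p_j) \subseteq \mathbb{Z}$, and invoking Dirichlet's theorem on primes in arithmetic progressions to force the new prime $p_{i+1}$ into prescribed residue classes modulo $4 p_1 \cdots p_i$.

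The key reduction is the following: for any primes $p \neq q$ with $q \nmid ab$, if $1 + b a^{-1} p^2$ is a nonzero non-square in $\mathbb{F}_q$, then $(a+bp^2)/(a+bq^2)$ is not a square in $\mathbb{Q}$. Indeed, $a + bq^2 \equiv a \not\equiv 0 \pmod q$ and $a + bp^2 \equiv a(1 + b a^{-1} p^2) \not\equiv 0 \pmod q$, so the ratio has $q$-adic valuation zero; were it equal to $(m/n)^2$ in lowest terms, then $q \nmid n$ and reduction modulo $q$ would exhibit $1 + b a^{-1} p^2$ as a square in $\mathbb{F}_q^\ast$, contradicting the hypothesis.

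With this in hand I construct $S$ inductively. The base case is any prime $p_1 \equiv 1 \pmod 4$ with $p_1 \nmid ab$ and $a + bp_1^2 \neq 0$, which exists by Dirichlet since only finitely many primes are excluded. Given $p_1, \ldots, p_i \in S$, for each $j \leq i$ I apply Lemma~\ref{lem:residual_nonsquares} with $\mathbb{E} = \mathbb{Z}$, $m = (p_j)$, and $e = b a^{-1} \bmod p_j$ (which is nonzero because $p_j \nmid ab$) to obtain $n_j \in \mathbb{Z}$ such that $1 + b a^{-1} n_j^2$ is a nonzero non-square in $\mathbb{F}_{p_j}$; in particular $n_j \not\equiv 0 \pmod{p_j}$. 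By the Chinese Remainder Theorem there exists $N \in \mathbb{Z}$ with $N \equiv n_j \pmod{p_j}$ for every $j \leq i$ and $N \equiv 1 \pmod 4$, and then $\gcd(N, 4 p_1 \cdots p_i) = 1$. Dirichlet's theorem supplies infinitely many primes in the progression $N + 4 p_1 \cdots p_i \mathbb{Z}$; I choose $p_{i+1}$ large enough that $p_{i+1} \nmid ab$ and $a + bp_{i+1}^2 \neq 0$. Since $p_{i+1} \equiv n_j \not\equiv 0 \pmod{p_j}$, automatically $p_{i+1} \neq p_j$ and $p_{i+1}^2 \equiv n_j^2 \pmod{p_j}$, so the key reduction gives $(a + bp_{i+1}^2)/(a + bp_j^2) \notin (\mathbb{Q}^\ast)^2$ for every $j \leq i$.

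The only conceptual ingredient is the $q$-adic valuation argument in the key reduction; everything else is an assembly of standard tools (Dirichlet plus CRT plus Lemma~\ref{lem:residual_nonsquares}), so I do not foresee a serious obstacle. The one bookkeeping subtlety is ensuring that the CRT system of congruences is compatible with a progression of coprime common difference to $N$, which is guaranteed precisely by the non-vanishing clause $n_j \not\equiv 0 \pmod{p_j}$ delivered by Lemma~\ref{lem:residual_nonsquares}.
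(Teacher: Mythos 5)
Your proposal is correct and follows essentially the same route as the paper's proof: an inductive construction where Lemma~\ref{lem:residual_nonsquares} supplies a residue $n_j$ making $1+ba^{-1}n_j^2$ a non-square mod $p_j$, and CRT plus Dirichlet produce a new prime $p_{i+1}\equiv 1 \pmod 4$ in the prescribed residue classes. Your explicit $q$-adic valuation argument for the descent from ``non-square mod $p_j$'' to ``non-square in $\mathbb{Q}$'', and your remark that $n_j\not\equiv 0 \pmod{p_j}$ guarantees $\gcd(N,4p_1\cdots p_i)=1$, merely spell out steps the paper leaves implicit.
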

\begin{proof}
As in Theorem~\ref{teo:infinito-set}, we will define $S$ inductively, starting
from $S_1=\{p_1\}$ with $p_1$ any prime $p_1\equiv 1$ mod 4 such that $p_1\nmid
ab$. Note that $a+bp_1^2\neq 0$. Suppose we have defined a set
$S_i=\{p_1,\ldots,p_i\}$ such that $p_j$ is prime, $p_j\equiv 1$ mod 4,
$p_j\nmid  ab$, and if $k\neq j$ then $\frac{a+bp_k^2}{a+bp_j^2}$ is not a
square in $\mathbb{Q}$. We want to construct the set $S_{i+1}$. Consider the
residual polynomial in the variable $n$ with coefficients in $\mathbb{F}_{p_j}$
\[[a+bn^2][a+bp_j^2]^{-1} = [1]+[a]^{-1}[bn^2]=[1+en^2].\]
Note that $[e]\neq [0]$ is well defined because $p_j\nmid  ab$. Let $n_j$ be
such that such that $[1+en_j^2]$ is not a square. Let $p$ be a prime such that
$p\equiv 1 \mod 4$, $p\equiv n_j\mod p_j$, $(p,ab)=1$. This prime always exists:
from the Chinese remainder theorem, we can compute the unique class $N$ mod
$4p_1\cdots p_i$ such that verifies the previous modular equations. As $N$ is a
unit mod $4p_1\cdots p_i$ , we can apply Dirichlet's theorem and find a prime
$p\equiv N\mod 4p_1\cdots p_i$ such that, in addition, it does not divide $ab$.
Take $p_{i+1}=p$ in $S_{i+1}$. By construction, $p \equiv 1 \mod 4$, $p \nmid
ab$ and, in $\mathbb{Z}/p_j\mathbb{Z}$
\[[a+bp^2][a+bp_j^2]^{-1}=[a+bm_j^2][a+bp_j^2]^{-1}=[1+em_j^2]\]
which is not a square mod $p_j$, so $\frac{a+bp^2}{a+bp_j^2}$ is not a square in
$\mathbb{Q}$, $1\leq j\leq i$.
\end{proof}

Note that, in this case, the set obtained differs from the set obtained in
Theorem~\ref{teo:infinito-set}. Because this set is a set on integer primes,
while the set $S=\{q_i\}$ obtained using Theorem~\ref{teo:infinito-set} is such
that $q_i$ is a multiple if the $i$-th integer prime which is $\equiv 1$ mod 4.

\begin{example}
Let $\mathcal{C}=x^2+y^2-6=0$ which does not have points in $\mathbb{Q}^2$. We
look for a set of integers such that $\frac{1+n^2}{1+m^2}$ is never a square.
Take $p_1=5$. Now we search a $[n]$ such that $1+n^2$ is not a square mod $5$.
For example $1+1^2=2$ is not a square mod 5. Now compute a prime $p_2$ such
that:
\[p_2\equiv 1 \mod 4,\quad p_2\equiv 1 \mod 5\]
By the Chinese reminder Theorem. $p_2\equiv 1\mod 20$ and, we can take, for
example $p_2=41$. Now we need to compute $p_3$, we impose $1+n^2$ not to be a
square mod $41$, the first non square of this form is $1+4^2=17$. Again we have
the following system of residual equations
\[p_3\equiv 1 \mod 4,\ p_3\equiv\ 1 \mod 5,\ p_3\equiv\ 4 \mod 41\]
So, this time, $p_3\equiv 701 \mod 820$. we can take $p_3=701$. Applying again
this method, we arrive that the next prime must be $p_4\equiv 266381 \mod
574820$ and, in particular we can take, $p_4=266381$. That is, the intersection
of $\mathcal{C}$ with the lines $y=5x$, $y=41x$, $y=701x$ and $y=266381x$ gives
four different quadratic fields of parametrization of the conic. In this case,
we obtain:
\[\mathbb{Q}\Big(\sqrt{\frac{3}{13}}\Big), \mathbb{Q}(\sqrt{3}),
\mathbb{Q}\Big(\sqrt{\frac{3}{245701}}\Big),
\mathbb{Q}\Big(\sqrt{\frac{3}{35479418581}}\Big)\]

If we apply instead Theorem~\ref{teo:infinito-set} we will get that the first
terms of $S$ are:
\[S=\{5,26,391,4031,175306,9276086,\ldots\}\]
\end{example}

\section{Any witness variety is a hypercircle}\label{sec:witness}

Let $\C \subseteq \F^N$ be a rational curve defined by the birational
parametrization $(\phi_1(t), \ldots, \phi_N(t))$, where $\phi_i(t)\in
\K(\alpha)(t)$. Suppose that the rational functions are written with common
denominator $g$, $\phi_i(t)=f_i/g$, $1\leq i\leq N$, $\gcd(f_1,\ldots,f_N,g)=1$
and that $\alpha$ is algebraic of degree $n$ over $\K$.

Perform Weil's descente parametric version presented in \cite{ARS-2}, substitute
$t=\sum_{i=0}^{n-1} \alpha^it_i$, where $t_i$ are new variables. Rewrite:
\[\phi_j\left(\sum_{i=0}^{n-1} \alpha^it_i\right)= \sum_{i=0}^{n-1} \alpha^i
\psi_{ij}(t_0,\ldots,t_{n-1}), \psi_{ij}= \frac{F_{ij}}{\delta} \in
\K(t_0,\ldots, t_{n-1})\]
Let $\mathcal{Z}$ be the Zariski closure of \[\{F_{ij}=0\ |\ 1\leq i\leq n-1,\
1\leq j\leq N\}\setminus \{\delta=0\}\subseteq \F^n.\] $\mathcal{Z}$ is called
the \textit{witness variety} of the parametrization $\phi$.

\begin{theorem}\label{teo:witness_variety_properties}
With the previous notation:
\begin{itemize}
\item Dim $\mathcal{Z}\leq 1$.
\item $\mathcal{Z}$ has at most one 1-dimensional component.
\item $\K$ is a field of definition of $\C$ if and only if Dim $\mathcal{Z}=1$.
\item $\C$ is parametrizable over $\K$ if and only if the 1-dimensional
component of $\mathcal{Z}$ is parametrizable over $\K$.
\end{itemize}
\end{theorem}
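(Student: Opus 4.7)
My plan is to set up a Galois-theoretic dictionary between $\mathcal{Z}$ and the common intersection of the Galois conjugates of $\C$, and to derive all four statements from this picture. Concretely, let $\sigma_1=\mathrm{id}, \sigma_2,\ldots,\sigma_n$ be the $\K$-embeddings of $\K(\alpha)$ into $\F$ and write $\alpha_k=\sigma_k(\alpha)$. Applying each $\sigma_k$ coefficient-wise to the defining identity
\[
\phi_j\Bigl(\sum_i \alpha^i t_i\Bigr) \;=\; \sum_i \alpha^i \psi_{ij}(t_0,\ldots,t_{n-1}),
\]
and using that the $\psi_{ij}$ have $\K$-coefficients and are therefore fixed, I obtain
\[
\phi_j^{(k)}\Bigl(\sum_i \alpha_k^i t_i\Bigr) \;=\; \sum_i \alpha_k^i \psi_{ij}(t_0,\ldots,t_{n-1}),
\]
where $\phi^{(k)}=\sigma_k(\phi)$ parametrizes the conjugate curve $\C^{(k)}=\sigma_k(\C)$. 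The system is governed by the invertible Vandermonde matrix $V=[\alpha_k^i]$, so each $\psi_{ij}$ is a $\K$-linear combination of the $\phi_j^{(k)}(\sum_i \alpha_k^i t_i)$. Consequently $(t_0^*,\ldots,t_{n-1}^*)\in\mathcal{Z}$ if and only if, for every $j$, the values $\phi_j^{(k)}(\sum_i \alpha_k^i t_i^*)$ are independent of $k$; equivalently, a point of $\mathcal{Z}$ corresponds to a common point $P\in\bigcap_k\C^{(k)}$, with the $t_i^*$ recovered from $(\phi^{(k),-1}(P))_k$ by Vandermonde inversion.

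Points 1 and 3 would then follow from a clean dichotomy. If every $\C^{(k)}$ equals $\C$, the defining ideal of $\C$ is Galois-stable over $\K$, so by Proposition~\ref{prop:def_field} $\K$ is a field of definition of $\C$; in that case $\bigcap_k\C^{(k)}=\C$ is one-dimensional and the Vandermonde inversion gives $\dim\mathcal{Z}=1$. Otherwise some $\C^{(k)}$ is distinct from $\C$, so $\C\cap\C^{(k)}$ is finite (two distinct irreducible curves meet in finitely many points), forcing $\mathcal{Z}$ to be at most zero-dimensional. This yields point 1 unconditionally and point 3.

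For points 2 and 4, I would assume $\C$ is $\K$-definable and show that $\mu:\mathcal{Z}\to\C$, $(t_0,\ldots,t_{n-1})\mapsto(\psi_{01},\ldots,\psi_{0N})$, is a $\K$-birational isomorphism: it is $\K$-defined because the $\psi_{0j}$ have $\K$-coefficients, and its inverse $P\mapsto V^{-1}(\phi^{(k),-1}(P))_k$ is $\K$-rational because, although built from elements of the normal closure, the uniqueness of the solution forces the tuple to be fixed by the whole Galois group, hence to lie in $\K(P)^n$. Since $\C$ is irreducible, this $\K$-birationality establishes point 2 (at most one one-dimensional component, coming from $\C$) and point 4 ($\K$-parametrizability transfers between $\mathcal{Z}$ and $\C$).

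The main obstacle I expect is bookkeeping around bad loci: the indeterminacy of $\phi^{(k),-1}$, the vanishing locus of $\delta$, and the possibility that some $\phi^{(k)}$ coincide (e.g.\ if $\phi$ is already defined over a proper subfield of $\K(\alpha)$). The arguments above are really made at the level of generic points, so the remaining work is to take Zariski closures carefully after excising the appropriate principal open subsets so that none of the one-dimensional geometry is lost.
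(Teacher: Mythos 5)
The paper does not actually prove this theorem; it only cites \cite{ARS-2,Ultraquadrics,Tesis-tab}, and your conjugate-curve/Vandermonde dictionary (points of $\mathcal{Z}$ off the bad locus $\leftrightarrow$ points of $\bigcap_k \C^{(k)}$, with the dichotomy on whether all conjugates coincide governing items 1 and 3, and the $\K$-rationality of $P\mapsto V^{-1}(\phi^{(k),-1}(P))_k$ via Galois equivariance giving items 2 and 4) is precisely the argument used in those references. Your outline is correct as stated, and the remaining bookkeeping you flag --- excising the zero loci of $\delta$ and of the denominators of the $\phi^{(k)}$, and checking that no one-dimensional component of $\mathcal{Z}$ can map to a single point (which follows since such a component would lie in the intersection of $n$ hyperplanes with invertible Vandermonde normal matrix) --- is genuine but routine.
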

\begin{proof}
\cite{ARS-2,Ultraquadrics,Tesis-tab}
\end{proof}

So, the problem of parametrizing $\C$ over $\K$ can be translated to the problem
of parametrizing the one-dimensional component of the witness variety over $\K$.
The interesting thing is that this problem is related with the study of
hypercircles:

\begin{defn}\label{def:hypercircle}
Let $\frac{at+b}{ct+d}\in \K(\alpha)(t)$ represents a $\K(\alpha)$-isomorphism
of $\F(t)$, $a,b,c,d\in \K(\alpha)$, $ad-bc\neq 0$. Write
\[\frac{at+b}{ct+d}=\psi_0(t)+\alpha \psi_1(t)+ \cdots+ \psi_{n-1}(t)\]
where $\psi_i(t)\in \K(t)$. The \textit{$\alpha$-hypercircle} associated to
$\frac{at+b}{ct+d}$ for the extension $\K\subseteq \K(\alpha)$ is the parametric
curve in $\F^n$ given by the parametrization $(\psi_0,\ldots, \psi_{n-1})$.
\end{defn}

Usually, if the extension $\K\subseteq \K(\alpha)$ and the primitive element
$\alpha$ are assumed, we could just talk about the witness variety of $\phi$ or
the hypercircle associated to $\frac{at+b}{ct+d}$. But, since we will
explicitly change the algebraic extension to consider or the primitive element,
we will try to avoid this language as it can be misleading.

It can be easily proved that fixed $\K\subseteq \K(\alpha)$ and a primitive
element $\alpha$, two linear fraction $u=\frac{at+b}{ct+d}$,
$v=\frac{a't+b'}{c't+d'}\in \K(\alpha)(t)$ define the same hypercircle if and
only there is a linear fraction $w=\frac{a''t+b''}{c''t+d''}\in \K(t)$ such that
$u(t)=v(w(t))$.

Note also that if $c=0$ in the linear fraction, then the hypercircle is a line
parametrizable over $\K$. Anyway, by composing on the right by an appropriate
linear fraction $w\in \K(t)$, we can always suppose that the associated unit of
the hypercircle is of the form $\frac{at+b}{t+d}$ ($c=1$). We will suppose that
this is the case from now on unless otherwise specified.

Now we show some other useful facts about hypercircles. The proofs of these
facts can be checked in \cite{Generalizing-circles}

\begin{lemma}\label{lem:lema_grado_d}
Let $\mathcal{U}$ be the $\alpha$-hypercircle associated to $\frac{at+b}{t+d}\in
\K(\alpha)(t)$ for the extension $\K\subseteq \K(\alpha)$. Then, the geometric
degree of $\mathcal{U}$ equals $[\K(d) :\K]$
\end{lemma}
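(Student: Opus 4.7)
The plan is to identify explicitly the common denominator and the numerator degrees of the parametrization $(\psi_0,\ldots,\psi_{n-1})$ as elements of $\K(t)$. Set $r=[\K(d):\K]$ and let $L(t)=\prod_{j=1}^{r}(t+d_j)\in\K[t]$, where $d_1,\ldots,d_r$ are the distinct $\K$-conjugates of $d$; up to sign this is the minimal polynomial of $-d$, hence $\K$-irreducible of degree $r$. Let $\sigma_1,\ldots,\sigma_n$ enumerate the embeddings $\K(\alpha)\hookrightarrow\overline{\K}$, with $\sigma_j(\alpha)=\alpha_j$. Since each $\psi_i\in\K(t)$ is $\sigma_j$-invariant, applying $\sigma_j$ to $\sum_{i=0}^{n-1}\alpha^{i}\psi_i(t)=\frac{at+b}{t+d}$ gives the Vandermonde system
\[
\sum_{i=0}^{n-1}\alpha_j^{i}\,\psi_i(t)\;=\;\frac{\sigma_j(a)t+\sigma_j(b)}{t+\sigma_j(d)},\qquad j=1,\ldots,n,
\]
and inverting the Vandermonde writes each $\psi_i$ as an $\overline{\K}$-linear combination of the $n$ rational functions on the right.

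Next I would show that the reduced common denominator of $\psi_0,\ldots,\psi_{n-1}$ in $\K(t)$ is exactly $L$. Each right-hand side above has at most a simple pole, located at some $-d_j$, and no other poles, so every $\psi_i$ has only simple poles and all of them lie in $\{-d_1,\ldots,-d_r\}$. Because $\psi_i\in\K(t)$, its reduced denominator is a polynomial in $\K[t]$ whose roots lie in this single $\K$-Galois orbit; by $\K$-irreducibility of $L$ together with the simple-pole bound, that denominator divides $L$. Hence $p_i:=L\,\psi_i\in\K[t]$ for every $i$. Conversely, if $L$ divided every $p_i$, then all $\psi_i$ would be polynomials and $\frac{at+b}{t+d}=\sum_i\alpha^i\psi_i(t)$ would be a polynomial too; this contradicts $ad-bc\neq 0$, which forces $(at+b)\not\equiv 0\pmod{t+d}$ and therefore an honest simple pole at $-d$. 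Thus $\gcd(L,p_0,\ldots,p_{n-1})=1$.

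Finally, as $t\to\infty$ each right-hand side tends to $\sigma_j(a)\in\overline{\K}$, so every $\psi_i$ has a finite limit at infinity and $\deg p_i\leq\deg L=r$. The parametrization is birational: since $u=(at+b)/(t+d)$ generates $\K(\alpha)(t)$ over $\K(\alpha)$, one has $\K(\alpha)(\psi_0,\ldots,\psi_{n-1})=\K(\alpha)(t)$, and a short tower-degree comparison over $\K(\psi_0,\ldots,\psi_{n-1})$ forces $\K(\psi_0,\ldots,\psi_{n-1})=\K(t)$. The projective parametrization $t\mapsto(L(t):p_0(t):\cdots:p_{n-1}(t))$ therefore has coprime components, maximum coordinate degree $r$, and is birational onto its image, so the geometric degree of $\mathcal{U}$ equals $r=[\K(d):\K]$. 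I expect the main obstacle to be the descent step: over $\overline{\K}[t]$ one sees the denominator $\prod_{j=1}^{n}(t+\sigma_j(d))=L^{n/r}$, and extracting exactly the reduced denominator $L$ in $\K[t]$ genuinely uses both the simple-pole bound on each $\psi_i$ and the $\K$-irreducibility of $L$.
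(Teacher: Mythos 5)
The paper does not prove Lemma~\ref{lem:lema_grado_d} itself; it only cites \cite{Generalizing-circles}, so there is no in-paper argument to match yours against. Your proof is correct and self-contained. The one point worth noting is that it reaches the same key identification as the cited reference --- that the reduced common denominator of $(\psi_0,\ldots,\psi_{n-1})$ is the degree-$r$ minimal polynomial of $-d$ over $\K$ --- by a slightly different route: the reference first normalizes to the unit $\frac{1}{t+d}$ (via Lemma~\ref{lem:forma-reducida}) and writes $\frac{1}{t+d}=\frac{m(t)}{M(t)}$ with $m=M/(t+d)\in\K(d)[t]$, reading off numerators of degree $\le r-1$ directly, whereas you keep the general unit and extract the pole structure of each $\psi_i$ from the conjugate Vandermonde system. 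Your version buys a uniform treatment without the affine-equivalence reduction, at the cost of the descent argument ($\overline{\K}$-poles simple and supported on one $\K$-orbit, plus irreducibility of $L$, forces the reduced denominator to divide $L$), which you handle correctly; the coprimality step via $b-ad\neq 0$, the degree bound from the finite limits at infinity, and the properness argument by the tower-degree comparison are all sound, and the final appeal to the standard fact that a proper parametrization in reduced coprime form has image degree equal to the maximum component degree is legitimate.
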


\begin{defn}
Let $\mathcal{U}$ be an $\alpha$-hypercircle, if the geometric degree of
$\mathcal{U}$ equals $n=[\K(\alpha) :\K]$ then $\mathcal{U}$ is called a
\textit{primitive} hypercircle. i.e. If the $d$ of an associated unit
$\frac{at+b}{t+d}$ is a primitive element for the extension $\K\subseteq
\K(\alpha)$.
\end{defn}

\begin{lemma}\label{lem:forma-reducida}
Let $u(t)=\frac{at+b}{t+d}$ be a unit and $\mathcal{U}$ its associated
$\alpha$-hypercircle for the extension $\K\subseteq \K(\alpha)$. Then
$\mathcal{U}$ is affinely equivalent over $\mathbb{K}$ to the
$\alpha$-hypercircle $\mathcal{U}^\star$ generated by
$u^{\star}(t)=\frac{1}{t+d}$ for the extension $\K\subseteq \K(\alpha)$.
\end{lemma}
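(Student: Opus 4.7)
The plan is to exploit the partial fraction decomposition $u(t) = a + (b-ad)\cdot \frac{1}{t+d}$, obtained by polynomial division, so that $u(t) = a + (b-ad)\, u^{\star}(t)$. Since $u$ is a unit we have $ad - bc \neq 0$, and here $c=1$, which forces $b - ad \neq 0$. Thus $b-ad$ is a nonzero element of $\K(\alpha)$.

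Next I would expand everything in the $\K$-basis $\{1, \alpha, \ldots, \alpha^{n-1}\}$ of $\K(\alpha)$. Write the coefficient expansions $a = \sum_{i=0}^{n-1} a_i \alpha^i$ and $b - ad = \sum_{i=0}^{n-1} c_i \alpha^i$ with $a_i, c_i \in \K$, and let $u^{\star}(t) = \sum_{j=0}^{n-1} \alpha^j \psi_j^{\star}(t)$ be the expansion defining $\mathcal{U}^{\star}$. Multiplication by $b - ad$ is a $\K$-linear endomorphism of $\K(\alpha)$; let $M \in \mathrm{Mat}_n(\K)$ be its matrix in the chosen basis, so that $(b-ad)\alpha^j = \sum_i M_{ij}\alpha^i$. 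Substituting into $u(t) = a + (b-ad)\, u^{\star}(t)$ and collecting powers of $\alpha$ yields
\[
\psi_i(t) \;=\; a_i + \sum_{j=0}^{n-1} M_{ij}\, \psi_j^{\star}(t), \qquad 0\leq i\leq n-1.
\]
Equivalently, the parametrization of $\mathcal{U}$ is obtained from that of $\mathcal{U}^{\star}$ by the affine map $\vec{\psi}^{\star} \mapsto \vec{a} + M\vec{\psi}^{\star}$ defined over $\K$.

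Finally I would observe that this affine map is a $\K$-isomorphism of $\F^n$: since $b-ad \neq 0$ is a unit of $\K(\alpha)$, multiplication by it is a $\K$-vector space automorphism, so $M \in GL_n(\K)$ and the map is invertible over $\K$. This gives precisely the $\K$-affine equivalence between $\mathcal{U}^{\star}$ and $\mathcal{U}$ claimed in the statement.

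There is no real obstacle; the only thing worth double-checking is the translation between the algebraic identity $u = a + (b-ad)u^{\star}$ in $\K(\alpha)(t)$ and the affine change of coordinates in $\F^n$, i.e.\ that reading off coefficients in the basis $\{1,\alpha,\ldots,\alpha^{n-1}\}$ really implements the linear action of $M$ componentwise. This is just the standard identification of $\K(\alpha)$ with $\K^n$ as a $\K$-vector space, so it causes no trouble.
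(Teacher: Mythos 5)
Your argument is correct: the decomposition $u(t)=a+(b-ad)u^{\star}(t)$ with $b-ad\neq 0$, read through the identification of $\K(\alpha)$ with $\K^{n}$, exhibits exactly the invertible $\K$-affine map $\vec{x}\mapsto\vec{a}+M\vec{x}$ carrying $\mathcal{U}^{\star}$ onto $\mathcal{U}$. The paper itself gives no proof here (it defers to the reference on hypercircles), and your reasoning is precisely the standard argument used there, so there is nothing to add.
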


\begin{theorem}\label{teo:paso_a_propio}
Let $\mathcal{U}$ be the non-primitive $\alpha$-hypercircle associated to
$u(t)=\frac{at+b}{t+d}\in\mathbb{K}(\alpha)(t)$ for the extension $\K\subseteq
\K(\alpha)$. Let $\mathcal{V}$ be the $d$-hypercircle associated to the unit
$\frac{1}{t+d}$ for the extension $\mathbb{K}\subseteq\mathbb{K}(d)$. Then,
there is an affine inclusion from $\mathbb{F}^r$ to $\mathbb{F}^n$, defined over
$\mathbb{K}$, that maps $\mathcal{V}$ onto $\mathcal{U}$.
\end{theorem}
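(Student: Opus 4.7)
The plan is to reduce the statement, via Lemma~\ref{lem:forma-reducida}, to the case of the single distinguished unit $u^\star(t)=\frac{1}{t+d}$. That lemma supplies an affine $\K$-equivalence between $\mathcal{U}$ and the $\alpha$-hypercircle $\mathcal{U}^\star$ generated by $u^\star$ for the extension $\K\subseteq\K(\alpha)$. It therefore suffices to build an affine $\K$-inclusion $\F^r\hookrightarrow\F^n$ carrying $\mathcal{V}$ onto $\mathcal{U}^\star$; composing with this equivalence yields the map demanded by the theorem.

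To construct the inclusion I would expand $u^\star(t)$ in the two relevant $\K(t)$-bases. Since $\{1,d,\ldots,d^{r-1}\}$ is a $\K$-basis of $\K(d)$ and $\{1,\alpha,\ldots,\alpha^{n-1}\}$ is a $\K$-basis of $\K(\alpha)$, there are unique $\varphi_i,\psi_j\in\K(t)$ with
\[\frac{1}{t+d}=\sum_{i=0}^{r-1}d^i\varphi_i(t)=\sum_{j=0}^{n-1}\alpha^j\psi_j(t),\]
and by Definition~\ref{def:hypercircle} these are, respectively, the parametrizations of $\mathcal{V}$ and $\mathcal{U}^\star$. Because $\K(d)\subseteq\K(\alpha)$, I can next fix scalars $c_{ij}\in\K$ with $d^i=\sum_{j=0}^{n-1}c_{ij}\alpha^j$ and substitute into the first expansion. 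Uniqueness of the $\K$-expansion in powers of $\alpha$ then forces
\[\psi_j(t)=\sum_{i=0}^{r-1}c_{ij}\,\varphi_i(t),\qquad 0\leq j\leq n-1.\]

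These identities are exactly the statement that the $\K$-linear map $L\colon\F^r\to\F^n$ defined by the matrix $(c_{ij})$ sends the parametric point $(\varphi_0(t),\ldots,\varphi_{r-1}(t))$ to $(\psi_0(t),\ldots,\psi_{n-1}(t))$ for every $t$; hence $L(\mathcal{V})=\mathcal{U}^\star$. To finish I need $L$ to be an inclusion (injective). The rows of $(c_{ij})$ are the coordinate vectors of $1,d,\ldots,d^{r-1}$ in the $\K$-basis $\{1,\alpha,\ldots,\alpha^{n-1}\}$; they are $\K$-linearly independent precisely because $\{1,d,\ldots,d^{r-1}\}$ is a $\K$-basis of $\K(d)\subseteq\K(\alpha)$, so the matrix has rank $r$ and $L$ is injective. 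Composing $L$ with the affine $\K$-equivalence supplied by Lemma~\ref{lem:forma-reducida} produces the required affine inclusion defined over $\K$ sending $\mathcal{V}$ onto $\mathcal{U}$.

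The only real point of care, and the mild obstacle in this argument, is the uniqueness step: one has to be sure that substituting $d^i=\sum_jc_{ij}\alpha^j$ into the $\K(d)$-expansion and reading off the coefficient of each $\alpha^j$ is legitimate. This is standard because $\{1,\alpha,\ldots,\alpha^{n-1}\}$ remains a $\K(t)$-basis of $\K(\alpha)(t)$, so the decomposition in which the $\psi_j$ are defined is unique as elements of $\K(t)$. With that verified, everything else is direct linear algebra and the affine nature of the final map follows from the affine character of both factors in the composition.
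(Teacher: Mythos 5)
Your proof is correct. Note that the paper itself gives no argument for this theorem (it defers, together with Lemmas~\ref{lem:lema_grado_d} and \ref{lem:forma-reducida} and Proposition~\ref{prop:puntos-infinito-directo}, to the reference \cite{Generalizing-circles}), and your route --- reduce to $u^\star=\frac{1}{t+d}$ via Lemma~\ref{lem:forma-reducida}, then realize the inclusion as the base-change matrix $(c_{ij})$ expressing $1,d,\ldots,d^{r-1}$ in the basis $1,\alpha,\ldots,\alpha^{n-1}$, with injectivity coming from the linear independence of the powers of $d$ --- is exactly the standard argument in that reference; the uniqueness point you single out (that $\{1,\alpha,\ldots,\alpha^{n-1}\}$ stays a basis over $\K(t)$) is indeed the only step needing care, and you handle it correctly.
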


\begin{proposition}\label{prop:puntos-infinito-directo}
Let $M_{\alpha}(t)$ be the minimal polynomial of $\alpha$ over $\mathbb{K}$. Let
$m_{\alpha}(t)= \frac{M_{\alpha}(t)}{t-\alpha}=
\sum_{i=0}^{n-1}l_it^i\in\mathbb{K}(\alpha)[t]$, where $l_{n-1}=1$. Then, the
points at infinity of every primitive $\alpha$-hypercircle for the extension
$\K\subseteq \K(\alpha)$ are
\[[l_0: l_1: \cdots: l_{n-2}: l_{n-1}: 0]\] and its conjugates.
\end{proposition}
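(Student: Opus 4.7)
The plan is to solve for the coordinates $\psi_i$ of the hypercircle's parametrization by Galois-averaging, and then read off the poles. Let $\alpha=\alpha_1,\ldots,\alpha_n$ be the roots of $M_\alpha$ in $\F$ and $\sigma_k:\K(\alpha)\hookrightarrow\F$ the $\K$-embedding with $\sigma_k(\alpha)=\alpha_k$. Writing $a_k=\sigma_k(a)$, $b_k=\sigma_k(b)$, $d_k=\sigma_k(d)$, applying each $\sigma_k$ to the defining identity $\frac{at+b}{t+d}=\sum_{i=0}^{n-1}\alpha^i\psi_i(t)$ (whose $\psi_i\in\K(t)$ are fixed) produces the linear system
$$\sum_{i=0}^{n-1}\alpha_k^i\,\psi_i(t)=\frac{a_kt+b_k}{t+d_k},\qquad k=1,\ldots,n.$$
Primitivity forces $[\K(d):\K]=n$, so the $d_k$ are pairwise distinct, and therefore the Vandermonde matrix $V=(\alpha_k^i)$ is invertible.

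I would then locate the points at infinity of the projective closure of the hypercircle using the common denominator $\delta(t)=\prod_{k=1}^n(t+d_k)\in\K[t]$. Expanding $(a_kt+b_k)/(t+d_k)=a_k+(b_k-a_kd_k)/(t+d_k)$ and inverting the Vandermonde system,
$$\delta(t)\,\psi_i(t)=\sum_{k=1}^{n}(V^{-1})_{ik}\Bigl(a_k\delta(t)+(b_k-a_kd_k)\prod_{j\neq k}(t+d_j)\Bigr).$$
Evaluating at $t=-d_k$ kills every summand except the $k$-th, so the point reached as $t\to-d_k$ has homogeneous coordinates $[(V^{-1})_{0k}:\cdots:(V^{-1})_{n-1,k}:0]$ after clearing the common scalar $(b_k-a_kd_k)\prod_{j\neq k}(d_j-d_k)$, which is nonzero since $ad-b\neq 0$ and the $d_k$ are distinct.

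The final step identifies the $k$-th column of $V^{-1}$ with the coefficient vector of the Lagrange polynomial $p_k(t)=\prod_{j\neq k}(t-\alpha_j)/(\alpha_k-\alpha_j)$. For $k=1$, $\prod_{j\neq 1}(t-\alpha_j)=m_\alpha(t)=\sum_{i=0}^{n-1}l_it^i$ and $\prod_{j\neq 1}(\alpha-\alpha_j)=m_\alpha(\alpha)$, so the direction reduces to $[l_0:l_1:\cdots:l_{n-1}:0]$ after clearing $1/m_\alpha(\alpha)$; for $k\neq 1$ one gets its $\sigma_k$-conjugate. The main point to double-check is that these $n$ distinct points exhaust the intersection with the hyperplane at infinity; this is clean because $\mathcal{U}$ has geometric degree $n$ by Lemma~\ref{lem:lema_grado_d}, and $n$ distinct points already account for all intersections counted with multiplicity. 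As a sanity check, the answer depends only on $d$, in agreement with Lemma~\ref{lem:forma-reducida}.
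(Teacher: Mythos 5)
The paper gives no proof of this proposition --- it is one of the facts quoted from \cite{Generalizing-circles} --- so there is no internal argument to compare against; judged on its own, your proof is correct and is essentially the standard one. Applying the $\K$-embeddings to $\frac{at+b}{t+d}=\sum_{i=0}^{n-1}\alpha^i\psi_i(t)$, inverting the Vandermonde system, evaluating the cleared numerators at $t=-d_k$ (where only the $k$-th summand survives, with nonzero scalar because $ad-b\neq 0$ and the $d_j$ are distinct), and identifying the $k$-th column of $V^{-1}$ with the coefficient vector of the Lagrange polynomial $p_k(t)=\prod_{j\neq k}(t-\alpha_j)/\prod_{j\neq k}(\alpha_k-\alpha_j)$ does yield $[l_0:\cdots:l_{n-1}:0]$ and its conjugates, and the degree count via Lemma~\ref{lem:lema_grado_d} correctly rules out any further points at infinity (including $t\to\infty$, which lands at the affine point corresponding to $a$). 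Two small repairs. First, the invertibility of $V=(\alpha_k^i)$ follows from the distinctness of the $\alpha_k$ (separability of $M_\alpha$), not from the distinctness of the $d_k$; the latter, which primitivity does give you, is what you need for $\prod_{j\neq k}(d_j-d_k)\neq 0$ and for the $n$ parameter values $-d_k$ to be distinct. Second, your exhaustion step silently uses that the $n$ conjugate points are pairwise distinct; justify it in one line, e.g.\ no two columns of $V^{-1}$ are proportional because $p_k(\alpha_{k'})=\delta_{kk'}$ (equivalently, $l_{n-2}=\alpha+k_{n-1}$, with $k_{n-1}$ the coefficient of $t^{n-1}$ in $M_\alpha$, is a primitive element, so its $n$ conjugates are distinct). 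With these touches the argument is complete.
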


With the notion of hypercircle we can extend
Theorem~\ref{teo:witness_variety_properties} by the following:

\begin{theorem}\label{teo:witness_and_hypercircles}
In the previous conditions, $\K$ is a field of parametrization of $\C$ and
$\phi(\frac{at+b}{ct+d})$ is a parametrization of $\C$ over $\K$ if and only if
the one-dimensional component of the witness variety $\mathcal{Z}$ is an
$\alpha$-hypercircle associated to $\frac{at+b}{ct+d}$ for the extension
$\K\subseteq\K(\alpha)$.
\end{theorem}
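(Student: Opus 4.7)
The plan is to show the two implications by tracking what happens when we substitute the hypercircle parametrization into the Weil descent equations.

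First, recall the key identity: if $\frac{at+b}{ct+d} = \sum_{i=0}^{n-1}\alpha^i\psi_i(t)$ with $\psi_i \in \K(t)$, then substituting $t_i = \psi_i(t)$ into the decomposition $\phi_j(\sum \alpha^i t_i) = \sum_{i=0}^{n-1}\alpha^i \psi_{ij}(t_0,\ldots,t_{n-1})$ gives
\[
\phi_j\!\left(\tfrac{at+b}{ct+d}\right) = \sum_{i=0}^{n-1}\alpha^i\,\psi_{ij}(\psi_0(t),\ldots,\psi_{n-1}(t)).
\]
This is the bridge between the hypercircle and the witness variety, and both directions rest on it.

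For the forward direction, assume $\phi(\frac{at+b}{ct+d}) \in \K(t)^N$. Since $\{1,\alpha,\ldots,\alpha^{n-1}\}$ is a $\K$-basis of $\K(\alpha)$, the left-hand side above being in $\K(t)$ forces $\psi_{ij}(\psi_0,\ldots,\psi_{n-1})=0$ for $1\leq i\leq n-1$, $1\leq j\leq N$. Clearing denominators, this says that the point $(\psi_0(t),\ldots,\psi_{n-1}(t))$ satisfies every $F_{ij}$ while avoiding $\delta=0$ (generically in $t$). Hence the image of this parametrization, which is by definition the $\alpha$-hypercircle $\mathcal{U}$ associated to $\frac{at+b}{ct+d}$, lies in $\mathcal{Z}$. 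Since $\mathcal{U}$ is one-dimensional and $\mathcal{Z}$ has at most one one-dimensional component by Theorem~\ref{teo:witness_variety_properties}, that component must be $\mathcal{U}$ itself.

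For the converse, suppose the one-dimensional component of $\mathcal{Z}$ is the $\alpha$-hypercircle $\mathcal{U}$ associated to $\frac{at+b}{ct+d}$, parametrized by $(\psi_0,\ldots,\psi_{n-1})$. By the defining equations of $\mathcal{Z}$, we have $\psi_{ij}(\psi_0,\ldots,\psi_{n-1})=0$ for $1\leq i\leq n-1$, so the identity above reduces to $\phi_j(\frac{at+b}{ct+d}) = \psi_{0j}(\psi_0,\ldots,\psi_{n-1}) \in \K(t)$. Thus $\phi(\frac{at+b}{ct+d})$ has coefficients in $\K$; birationality is preserved because $\frac{at+b}{ct+d}$ is a $\K(\alpha)$-automorphism of $\F(t)$ and $\phi$ was birational, so the composition is birational as well, confirming that it is a $\K$-parametrization of $\C$.

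The main technical point to handle carefully is the role of the common denominator $\delta$: one must check that when $(\psi_0,\ldots,\psi_{n-1})$ is evaluated at a generic $t$, it lands in the open set $\{\delta \neq 0\}$, so that the vanishing of the $F_{ij}$ genuinely encodes the vanishing of the rational functions $\psi_{ij}$ rather than being a spurious solution on $\delta=0$. This is guaranteed because $\frac{at+b}{ct+d}$ is a unit of $\F(t)$, so the substitution $t \mapsto \sum\alpha^i\psi_i(t)$ gives an automorphism of $\F(t)$ under which the generic point of $\F^n$ restricted to $\mathcal{U}$ is not a pole of any $\phi_j$. Everything else is a direct bookkeeping of the $\K$-basis expansion in $\alpha$.
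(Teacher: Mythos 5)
Your argument is essentially correct, and it is worth noting that the paper itself offers no proof of this theorem at all --- it simply cites \cite{ARS-2,Ultraquadrics} --- so your self-contained derivation is genuinely more than what the text provides. The substitution identity $\phi_j(\frac{at+b}{ct+d})=\sum_{i=0}^{n-1}\alpha^i\psi_{ij}(\psi_0(t),\ldots,\psi_{n-1}(t))$ together with the uniqueness of the expansion over the $\K(t)$-basis $\{1,\alpha,\ldots,\alpha^{n-1}\}$ of $\K(\alpha)(t)$ is exactly the right bridge, and the use of Theorem~\ref{teo:witness_variety_properties} (at most one one-dimensional component) to upgrade ``$\mathcal{U}\subseteq\mathcal{Z}$'' to ``$\mathcal{U}$ \emph{is} the one-dimensional component'' is the standard closing move of the argument in the cited sources. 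The one place where your justification is looser than it should be is the non-vanishing of $\delta$ along the hypercircle: $\delta$ is not $g(\sum_i\alpha^it_i)$ itself but its norm, i.e.\ a product $\prod_k g^{\sigma_k}(\sum_i\alpha_k^it_i)$ over the conjugates $\alpha_k$ of $\alpha$, so after substituting $t_i=\psi_i(t)$ you must check that \emph{every} conjugate factor $g^{\sigma_k}\bigl(\frac{a_kt+b_k}{c_kt+d_k}\bigr)$ is nonzero, not only the factor corresponding to $\alpha$ itself. This follows because conjugation preserves $ad-bc\neq 0$, so each conjugate linear fraction is again an automorphism of $\F(t)$ and $g^{\sigma_k}\neq 0$; your appeal to the single automorphism $t\mapsto\sum_i\alpha^i\psi_i(t)$ does not by itself cover these other factors. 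With that one sentence added, the proof is complete and matches in spirit the argument of \cite{ARS-2,Ultraquadrics}.
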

\begin{proof}
\cite{ARS-2,Ultraquadrics}
\end{proof}

This Theorem will be essential in the discussion of the reparametrization
algorithm, since we will use different reparametrization units, different
witness varieties and, specially, different extensions.

As a Consequence of Corollary~\ref{cor:fields_of_parametrization} we can prove
that any witness variety is indeed a hypercircle but possibly with respect to a
different algebraic extension.

\begin{theorem}\label{teo:vteshc}
Let $\mathcal{C}$ be a curve $\mathbb{K}$-definable, that is not
$\mathbb{K}$-parametrizable, where
$\mathbb{K}=\mathbb{Q}(y_1,\ldots,y_u)(\beta)$ as in Section~\ref{sec:prelim}
such that $\mathcal{C}$ is given by a $\mathbb{K}(\alpha)$-parametrization. Let
$\mathcal{U}$ be the 1-dimensional component of the $\alpha$-witness variety of
$\mathcal{C}$ for the extension $\K\subseteq \K(\alpha)$. Then, there are
infinitely many $\eta$, quadratic elements over $\mathbb{K}$, such that
$\mathcal{U}$ is an $\alpha$-hypercircle with respect to the algebraic extension
$\mathbb{K}(\eta) \subseteq \mathbb{K}(\eta, \alpha)$.
\end{theorem}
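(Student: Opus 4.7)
The plan is to combine Corollary~\ref{cor:fields_of_parametrization} with Theorem~\ref{teo:witness_and_hypercircles}, using the fact that the witness variety, as a subscheme of $\F^n$, is insensitive to enlargement of the base field provided we keep the same basis of $\K(\alpha)/\K$ as a basis of the enlarged extension.

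First I would invoke Corollary~\ref{cor:fields_of_parametrization} to produce an infinite family of pairwise distinct quadratic fields of parametrization $\K(\eta)$ of $\mathcal{C}$. Since $\K(\alpha)$ is finite over $\K$ it has only finitely many subfields, hence only finitely many quadratic subfields. Throwing those out leaves infinitely many $\eta$ satisfying $\K(\eta)\not\subseteq \K(\alpha)$, and since $[\K(\eta):\K]=2$ this forces $\K(\eta)\cap \K(\alpha)=\K$. Because $\K(\eta)/\K$ is Galois, this intersection condition yields linear disjointness of $\K(\eta)$ and $\K(\alpha)$ over $\K$. Consequently $[\K(\eta,\alpha):\K(\eta)] = [\K(\alpha):\K] = n$, and $\{1,\alpha,\ldots,\alpha^{n-1}\}$ remains a $\K(\eta)$-basis of $\K(\eta,\alpha)$.

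The next step, which is the technical heart of the argument, is to observe that the polynomials $F_{ij}$ and $\delta$ produced by Weil's descente depend only on $\phi$ and on the chosen basis of the top field over the ground field. Using exactly the same basis $\{1,\alpha,\ldots,\alpha^{n-1}\}$ to perform the descente relative to $\K(\eta)\subseteq \K(\eta,\alpha)$ yields the same equations $F_{ij}=0$ and the same non-vanishing locus $\delta\ne 0$. Hence the resulting witness variety coincides, as a subset of $\F^n$, with the original $\mathcal{Z}$, and in particular the unique $1$-dimensional component for the new extension is our $\mathcal{U}$.

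Finally, since $\K(\eta)$ is a field of parametrization of $\mathcal{C}$ and $\phi\in\K(\eta,\alpha)(t)$, I apply Theorem~\ref{teo:witness_and_hypercircles} to the extension $\K(\eta)\subseteq \K(\eta,\alpha)$: there exists a unit $\tfrac{at+b}{ct+d}\in \K(\eta,\alpha)(t)$ such that $\phi\!\left(\tfrac{at+b}{ct+d}\right)$ has coefficients in $\K(\eta)(t)$ and the $1$-dimensional component of the witness variety, which we have just identified with $\mathcal{U}$, is the $\alpha$-hypercircle associated to $\tfrac{at+b}{ct+d}$ for this extension. This gives the conclusion for each of the infinitely many admissible $\eta$. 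The only genuine obstacle is the linear disjointness check that justifies reusing the basis; once that is settled, the remainder is a direct invocation of the cited results.
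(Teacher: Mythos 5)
Your proof is correct and follows essentially the same route as the paper: Corollary~\ref{cor:fields_of_parametrization} supplies infinitely many quadratic fields of parametrization, one discards the finitely many bad $\eta$ so that the minimal polynomial of $\alpha$ (and hence the Weil descente output) is unchanged over $\K(\eta)$, and one concludes via Theorem~\ref{teo:witness_and_hypercircles}. The only (harmless) difference is that the paper excludes $\eta$ lying in the normal closure of $\K(\alpha)$ over $\K$, whereas you use the weaker but equally sufficient condition $\K(\eta)\cap\K(\alpha)=\K$, justified by linear disjointness of the Galois quadratic extension.
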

\begin{proof}
By Corollary~\ref{cor:fields_of_parametrization}, there are infinitely many
distinct quadratic fields of parametrization $\mathbb{K}(\gamma)$ of
$\mathcal{C}$. Let $\eta$ be any of this quadratic element over $\mathbb{K}$
such that $\eta$ does not belong to the normal closure of $\mathbb{K}(\alpha)$
over $\mathbb{K}$. In particular, there are infinitely many $\eta$ satisfying
this condition. The minimal polynomial of $\alpha$ over $\mathbb{K}(\eta)$
equals its minimal polynomial over $\mathbb{K}$. Hence, by the computational
definition of the witness variety, the output is the same considering the
extension $\K\subseteq \K(\alpha)$ or $\K(\eta)\subseteq \K(\eta, \alpha)$. But,
since $\mathcal{C}$ can be parametrized over $\mathbb{K}(\eta)$, from
Theorem~\ref{teo:witness_variety_properties}, it follows that $\mathcal{U}$ is
an
$\alpha$-hypercircle with respect to the extension $\mathbb{K}(\eta) \subseteq
\mathbb{K}(\eta, \alpha)$.
\end{proof}

This result is important because hypercircles have a well known geometry. So, in
particular, we automatically know many geometric properties of witness
varieties, including the possible degrees, the Hilbert function, the smoothness,
the structure at infinity or specific algorithms for parametrization and
implicitization (See \cite{Generalizing-circles}).

\section{Optimal Affine Reparametrization of a Curve}
We have seen that any witness variety is a hypercircle for a convenient
algebraic extension. In this Section, we present a method of optimal
reparametrization of a curve by affine change of variables only. We will always
suppose that our base field $\K$ is as in Section~\ref{sec:prelim}. Suppose that
$\mathcal{C}$ is given by a parametrization $\phi$ over $\mathbb{K}(\alpha)$. We
want to obtain reparametrizations of $\mathcal{C}$ by affine change of variables
$t\mapsto e_1t+e_2$ only. In this case, there is a minimum field (up to
$\K$-isomorphism) $\mathbb{K}(\gamma)$ such that $\phi(e_1 t+e_2) \in
\mathbb{K}(\gamma)(t)$. That is, there are $e_1, e_2$ such that
$\phi(e_1t+e_2)\in \mathbb{K}(\gamma)(t)$ and, for every pair $e_1', e_2'\in
\mathbb{F}$, $e_1'\neq 0$, the field generated over $\mathbb{K}$ by the
coefficients of $\phi(e_1't+e_2')$ contains (a field isomorphic to)
$\mathbb{K}(\gamma)$.  Moreover, to obtain a reparametrization over
$\mathbb{K}(\gamma)$, we do not need any point over the ground field $\K$. This
is a generalization of the reparametrization problem for polynomially
parametrizable curves in \cite{SV-Polynomial}. We now proceed in several steps
towards the algorithm.

\begin{proposition}\label{prop:buen_punto_infinito}
Let $\mathcal{C}$ be a $\mathbb{K}$-definable curve given by a parametrization
over $\mathbb{K}(\alpha)$. Let $\mathcal{U}$ be the 1 dimensional component of
the $\alpha$-witness variety of $\mathcal{C}$. Then, there is at least one point
at infinity of $\mathcal{U}$ that admits a representation over
$\mathbb{K}(\alpha)$.
\end{proposition}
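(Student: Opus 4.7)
The plan is to invoke Theorem~\ref{teo:vteshc} to present $\mathcal{U}$ as an $\alpha$-hypercircle for a cleverly chosen quadratic twist $\mathbb{K}(\eta)\subseteq\mathbb{K}(\eta,\alpha)$, and then read off an explicit point at infinity using the coefficient formula of Proposition~\ref{prop:puntos-infinito-directo}.

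First I would use Theorem~\ref{teo:vteshc} to fix a quadratic element $\eta$ over $\mathbb{K}$ such that $\mathcal{U}$ is an $\alpha$-hypercircle for $\mathbb{K}(\eta)\subseteq\mathbb{K}(\eta,\alpha)$ and, in addition, $\eta$ lies outside the normal closure of $\mathbb{K}(\alpha)/\mathbb{K}$. Since Theorem~\ref{teo:vteshc} produces infinitely many valid $\eta$ and only finitely many lie in that normal closure, such a choice is always available. By linear disjointness, the minimal polynomial $M_\alpha\in\mathbb{K}[t]$ of $\alpha$ over $\mathbb{K}$ remains irreducible over $\mathbb{K}(\eta)$, so it is also the minimal polynomial of $\alpha$ over $\mathbb{K}(\eta)$, and the polynomial $m_\alpha(t)=M_\alpha(t)/(t-\alpha)$ has all its coefficients in $\mathbb{K}(\alpha)$.

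Assuming first that $\mathcal{U}$ is primitive as an $\alpha$-hypercircle for $\mathbb{K}(\eta)\subseteq\mathbb{K}(\eta,\alpha)$, Proposition~\ref{prop:puntos-infinito-directo} identifies its points at infinity with $[l_0:l_1:\cdots:l_{n-2}:1:0]$ and its conjugates, where the $l_i$ are precisely the coefficients of $m_\alpha$. By the previous paragraph these lie in $\mathbb{K}(\alpha)$, which gives the desired point at infinity.

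If $\mathcal{U}$ is not primitive, its geometric degree $r<n$ corresponds, via Lemma~\ref{lem:lema_grado_d}, to an intermediate element $d\in\mathbb{K}(\eta,\alpha)$ with $[\mathbb{K}(\eta)(d):\mathbb{K}(\eta)]=r$, and Theorem~\ref{teo:paso_a_propio} realizes $\mathcal{U}$ as the image of the primitive $d$-hypercircle $\mathcal{V}\subseteq\mathbb{F}^r$ under an affine inclusion defined over $\mathbb{K}(\eta)$. Imposing the additional cofinite condition that $\eta$ also lies outside the normal closure of $\mathbb{K}(d)/\mathbb{K}$, the same argument applied to $d$ in place of $\alpha$ yields a point at infinity of $\mathcal{V}$ with coordinates in $\mathbb{K}(d)\subseteq\mathbb{K}(\alpha)$. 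The hard part will be this last step: the affine inclusion is only $\mathbb{K}(\eta)$-rational, and its naive pushforward lands in $\mathbb{K}(\eta,d)$ rather than $\mathbb{K}(\alpha)$. The resolution is to exploit the $\mathbb{K}$-definability of $\mathcal{U}$: its set of points at infinity is a finite $\mathrm{Gal}(\bar{\mathbb{K}}/\mathbb{K})$-invariant set in bijection with that of $\mathcal{V}$, so tracing the Galois action through the affine inclusion shows that the image of our chosen point is fixed by $\mathrm{Gal}(\bar{\mathbb{K}}/\mathbb{K}(\alpha))$, hence is defined over $\mathbb{K}(\alpha)$.
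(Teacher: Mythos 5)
There is a genuine gap in your non-primitive case. Two preliminary remarks first: Theorem~\ref{teo:vteshc} only applies when $\C$ is \emph{not} $\K$-parametrizable, so you need to treat the $\K$-parametrizable case separately (there the paper's argument is easy: the base field is $\K$ itself, $d\in\K(\alpha)$, and the affine inclusion of Theorem~\ref{teo:paso_a_propio} is defined over $\K$, so the explicit point at infinity already lies in $\K(d)\subseteq\K(\alpha)$). Your primitive case is fine: the minimal polynomial of $\alpha$ over $\K(\eta)$ equals $M_\alpha\in\K[t]$, so the point $[l_0:\cdots:l_{n-1}:0]$ of Proposition~\ref{prop:puntos-infinito-directo} has coordinates in $\K(\alpha)$.

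The problem is the step you yourself flag as ``the hard part.'' First, the condition ``$\eta$ outside the normal closure of $\K(d)/\K$'' is circular: $d$ is the denominator of a unit associated to $\mathcal{U}$ \emph{as a hypercircle for the extension} $\K(\eta)\subseteq\K(\eta,\alpha)$, so $d$ depends on the choice of $\eta$ and a priori only lies in $\K(\eta,\alpha)$; you cannot constrain $\eta$ by a field that is defined only after $\eta$ is chosen. Second, and more seriously, the explicit point at infinity of $\mathcal{V}$ has coordinates given by the coefficients of $M_d(t)/(t+d)$ with $M_d\in\K(\eta)[t]$, and the affine inclusion is defined over $\K(\eta)$; its image therefore lies in $\K(\eta,d)\subseteq\K(\eta,\alpha)$, and nothing so far puts it in $\K(\alpha)$. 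Your proposed rescue --- that the Galois-stability of the finite set of points at infinity of the $\K$-definable curve $\mathcal{U}$ forces the image point to be fixed by $\mathrm{Gal}(\overline{\K}/\K(\alpha))$ --- does not follow: stability of the \emph{set} only says that $\mathrm{Gal}(\overline{\K}/\K(\alpha))$ permutes the points at infinity, and since the inclusion is only $\K(\eta)$-rational, a $\sigma$ moving $\eta$ conjugates the inclusion itself, so there is no reason your chosen point is fixed. What you are asserting here is essentially the statement to be proved. The paper closes this gap differently: for \emph{each} of the infinitely many admissible $\eta$, some point at infinity of $\mathcal{U}$ is representable over $\K(\eta,\alpha)$; since there are only finitely many points at infinity, one fixed point $p$ works for infinitely many pairwise distinct quadratic $\eta$, and a point representable over $\K(\eta_i,\alpha)$ for infinitely many distinct quadratic extensions $\K(\eta_i)$ must be representable over $\K(\alpha)$ (otherwise all these $\K(\eta_i,\alpha)$ would coincide with one fixed quadratic extension of $\K(\alpha)$, which cannot contain infinitely many distinct quadratic subfields of $\K$). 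You need this pigeonhole-over-$\eta$ argument, or something equivalent, to finish.
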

\begin{proof}
Suppose first that $\mathcal{C}$ can be parametrizable over $\K$. Then, by
Theorem~\ref{teo:witness_variety_properties} $\mathcal{U}$ is an
$\alpha$-hypercircle for the extension $\K\subseteq \K(\alpha)$ associated to a
unit $\frac{at+b}{t+d}\in \K(\alpha)(t)$. If $\mathcal{U}$ is a primitive
$\alpha$-hypercircle,
we have explicitly presented a point at infinity with a representation over
$\K(\alpha)$ in
Proposition~\ref{prop:puntos-infinito-directo}. If $\mathcal{U}$ is not a
primitive $\alpha$-hypercircle, then, by Theorem~\ref{teo:paso_a_propio},
$\mathcal{U}$ is the image of a primitive $d$-hypercircle $\mathcal{U}_1$ for
the extension $\mathbb{K}\subseteq \mathbb{K}(d)$ under an affine inclusion
$\F^r\rightarrow \F^n$ defined over $\K$, $r=[\K(d): \K]$. Hence, at least one
point at infinity of $\mathcal{U}_1$ has a representation over
$\mathbb{K}(d)\subseteq \mathbb{K}(\alpha)$. As the affine inclusion is defined
over $\mathbb{K}$, the corresponding point at infinity of $\mathcal{U}$ also
admits a representation in $\mathbb{K}(\alpha)$.

Suppose now that $\mathcal{C}$ is not $\mathbb{K}$-parametrizable. Then, by
Theorem~\ref{teo:vteshc}, there are infinitely many quadratic elements $\eta$
over $\mathbb{K}$ such that $\mathcal{U}$ is an $\alpha$-hypercircle with
respect to the extension of fields $\mathbb{K}(\eta) \subseteq \mathbb{K}(\eta,
\alpha)$. As there are only finite points at infinity, we conclude that there is
a point $p$ at infinity that admits a representation over infinitely many fields
of the form $\mathbb{K}(\eta, \alpha)$. Necessarily, this point admits a
representation over $\mathbb{K}(\alpha)$.
\end{proof}

Let $\mathcal{C}$ be a parametric curve parametrizable over $\mathbb{K}$ but
such that it is given by a parametrization over $\mathbb{K}(\alpha)$.
$[\mathbb{K}(\alpha) :\mathbb{K}]=n$. Suppose that the associated hypercircle
$\mathcal{U}$ to $\mathcal{C}$ is of degree $r< n$. Let $u(t)= \frac{at+b}{t+d}
\in \mathbb{K}( \alpha)(t)$ be a unit associated to $\mathcal{U}$. Then
$\mathbb{K}(d) \subsetneq \mathbb{K}(\alpha)$ and $[\mathbb{K}(d) :
\mathbb{K}]=r$. By Theorem~\ref{teo:paso_a_propio}, $\mathcal{U}$ is
$\mathbb{K}$-affinely equivalent to the primitive $d$-hypercircle defined by
$\frac{1}{t+d}$ for the extension $\K\subseteq \K(d)$ in $\mathbb{K}^r$. Here,
we present how to compute $\K(d)$ from the implicit equations of $\mathcal{U}$
and a reparametrization of $\mathcal{C}$ over $\mathbb{K}(d)$.

\begin{proposition}\label{prop:cuerpo_intermedio}
If $\mathcal{U}$ is the $\alpha$-hypercircle associated to $\frac{at+b}{t+d}$
for the extension $\K\subseteq \K(\alpha)$, let $[a_0: \ldots: a_{n-1}: 0 ]$ be
a point at infinity of $\mathcal{U}$ given by a representation over
$\mathbb{K}(\alpha)$, suppose that it is dehomogenized with respect to an index
$i$. Then, $\mathbb{K}(d)$ is $\K$-isomorphic to $\mathbb{K}(a_0, \ldots,
a_{n-1})$. In particular, there is another point at infinity $[b_0: \ldots:
b_{n-1}: 0 ]$ of $\mathcal{U}$, given by a representation over $\K(\alpha)$,
dehomogenized with respect to an index $i$ such that
$\K(d)=\K(b_0,\ldots,b_{n-1})$.
\end{proposition}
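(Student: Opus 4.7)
The plan is to reduce to a primitive hypercircle via Theorem~\ref{teo:paso_a_propio}, identify its points at infinity through Proposition~\ref{prop:puntos-infinito-directo}, and exploit that a $\K$-linear embedding of projective spaces preserves the field generated over $\K$ by the dehomogenized coordinates of a point.

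First I would split into two cases. If $\mathcal{U}$ is primitive (so $[\K(d):\K]=n$), then Proposition~\ref{prop:puntos-infinito-directo} describes every point at infinity of $\mathcal{U}$ as $[l_0(d'):l_1(d'):\cdots:l_{n-2}(d'):1:0]$ where $d'$ ranges over the Galois conjugates of $d$ and the $l_i$ are the coefficients of $M_d(t)/(t-d')$. Since $l_{n-2}(d')=d'+c_{n-1}$ (with $c_{n-1}$ the subleading coefficient of $M_d$) already generates $\K(d')$ over $\K$, any dehomogenization yields a point whose coordinates generate $\K(d')$, a field $\K$-isomorphic to $\K(d)$. If $\mathcal{U}$ is not primitive, Theorem~\ref{teo:paso_a_propio} exhibits $\mathcal{U}$ as the image of a primitive $d$-hypercircle $\mathcal{V}\subseteq \F^r$ under an affine inclusion $\iota:\F^r\to\F^n$ defined over $\K$. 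Projectivising, $\iota$ extends to a closed embedding whose restriction to the hyperplanes at infinity is a $\K$-linear injection $L:\mathbb{P}^{r-1}\to\mathbb{P}^{n-1}$ represented by a rank-$r$ matrix $A$ with entries in $\K$.

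The key technical ingredient I would establish is that such an $L$ preserves the field generated over $\K$ by the dehomogenized coordinates of any point $q\in\mathbb{P}^{r-1}(\F)$, as a subfield of $\F$. One inclusion is immediate: each homogeneous coordinate of $L(q)$ is a $\K$-linear combination of those of $q$, so any ratio of coordinates of $L(q)$ is a $\K$-rational function of the ratios of coordinates of $q$. For the reverse inclusion one takes a left inverse $A^{+}$ of $A$ with entries in $\K$; this expresses each coordinate of $q$ as a $\K$-linear combination of those of $L(q)$, and hence the corresponding ratios as $\K$-rational functions of the ratios of the coordinates of $L(q)$. With this lemma, the non-primitive case reduces at once to the primitive one applied to $\mathcal{V}$.

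Combining these facts, each point at infinity of $\mathcal{U}$ is the image $L(q')$ of a point $q'=[l_0(d'):\cdots:l_{r-2}(d'):1]$ for some Galois conjugate $d'$ of $d$, and its dehomogenized coordinates generate the subfield $\K(d')\subseteq\F$, which is $\K$-isomorphic to $\K(d)$; this proves the first assertion. For the ``in particular'' part, choosing the conjugate $d'=d$ itself yields a point at infinity of $\mathcal{U}$ whose dehomogenized coordinates generate $\K(d)$ as a subfield of $\F$, not merely a $\K$-isomorphic copy. The main obstacle will be verifying cleanly that ratios of coordinates of $q$ can be recovered from ratios of coordinates of $L(q)$ over $\K$; once the left-inverse argument is in place, everything else is bookkeeping through the reduction of Theorem~\ref{teo:paso_a_propio} and the explicit formulas of Proposition~\ref{prop:puntos-infinito-directo}.
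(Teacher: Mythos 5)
Your proposal is correct and follows essentially the same route as the paper: reduce the non-primitive case to the primitive $d$-hypercircle via Theorem~\ref{teo:paso_a_propio}, read off the points at infinity from Proposition~\ref{prop:puntos-infinito-directo} (observing that the coefficient $l_{r-2}$ already generates $\K(d)$), and transport the field through the $\K$-defined affine inclusion, handling arbitrary points at infinity by conjugation. The only difference is that you make explicit, via the left-inverse of the rank-$r$ matrix, why a $\K$-linear embedding preserves the field generated by the dehomogenized coordinates --- a step the paper asserts without detail --- and that the paper first normalizes the unit to $\frac{1}{t+d}$ using Lemma~\ref{lem:forma-reducida}, which you bypass.
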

\begin{proof}
By Lemma~\ref{lem:forma-reducida} $\mathcal{U}$ is affinely equivalent over
$\mathbb{K}$ to the $\alpha$-hypercircle $\mathcal{U}_1$ associated to
$\frac{1}{t+d}$
for the extension $\K\subseteq \K(\alpha)$. Thus, the (dehomogenized) points at
infinity of $\mathcal{U}$ and $\mathcal{U}_1$ generate the same algebraic
extension over $\mathbb{K}$. So, without loss of generality, we may suppose that
$u(t)=\frac{1}{t+d}$ is the unit associated to $\mathcal{U}$. Let
$M(t)=t^r+k_{r-1}t^{r-1}+\cdots+k_0$ be the minimal polynomial of $-d$ over
$\mathbb{K}$ and let \[m(t)= \frac{M(t)}{t+d} =l_{r-1}t^{r-1} +l_{r-2} t^{r-2}+
\cdots+ l_0 \in \mathbb{K}(d)[t].\] Let $\mathcal{U}_d \subseteq \mathbb{K}^r$ be
the $d$-hypercircle associated to $u(t)$ for the extension of fields $\mathbb{K}
\subseteq \mathbb{K}(d)$. By Proposition~\ref{prop:puntos-infinito-directo} the
points at infinity of $\mathcal{U}_d$ are
$[l_0: l_1: \cdots: l_{r-2}: l_{r-1}: 0]$
and its conjugates. Notice that $l_{r-2}=k_{r-1}-d$, so $\mathbb{K}( l_0,
\ldots, l_{r}) = \mathbb{K}(d)$. Finally, from Theorem~\ref{teo:paso_a_propio}
since there is an affine inclusion $\mathbb{F}^r \rightarrow \mathbb{F}^n$ that
maps $\mathcal{U}_d$ onto $\mathcal{U}_1$ that is defined over $\mathbb{K}$, the
field that generates the points at infinity is the same. In particular, the
point $[l_0: l_1: \cdots: l_{r-2}: l_{r-1}: 0]$ is mapped to a point
$[b_0: \ldots: b_{n-1}: 0]$ that has a representation over $\K(d)\subseteq
\K(\alpha)$. By conjugation, $\mathbb{K}(a_0, \ldots, a_{n-1})$ is isomorphic to
$\K(b_0, \ldots, b_{n-1})= \mathbb{K}(l_0, \ldots, l_{n-1})= \mathbb{K}(d)$.
\end{proof}

Once we know how to compute $\K(d)$, we have a method to reparametrize a curve
over
$\mathbb{K}(d)$.

\begin{theorem}\label{teo:aplicacion_rep_afin}
Let $\mathcal{C}$ be a curve $\mathbb{K}$-definable given by a birational
parametrization $\phi$ with coefficients in $\mathbb{K}(\alpha)$. Let
$\mathcal{U}$ be the 1-dimensional component of the $\alpha$-witness variety of
$\mathcal{C}$ for the extension $\K\subseteq\K(\alpha)$. Suppose that the degree
of $\mathcal{U}$ is $r<n$. Then, $\mathcal{C}$ admits a reparametrization over
$\mathbb{K}(\gamma)\subseteq \mathbb{K}(\alpha)$, where $[\mathbb{K}(\gamma):
\mathbb{K}]=r$. This reparametrization can be taken affine, $t\rightarrow
f_1t+f_2\in \K(\alpha)[t]$.

Moreover, if $e_1, e_2 \in \mathbb{F}$, $e_1\neq 0$ are algebraic, let $\phi(e_1
t+e_2)$ be another parametrization of $\mathcal{C}$ and let $\mathbb{L}$ be the
field generated over $\mathbb{K}$ by the coefficients of $\phi(e_1t+e_2)$, then
\begin{enumerate}
\item $[\mathbb{L}:\mathbb{K}]\geq r$.
\item $\mathbb{L}$ contains (a field isomorphic to) $\mathbb{K}(\gamma)$.
\item If $[\mathbb{L}: \mathbb{K}]=r$ then $\mathbb{L}$ is isomorphic to
$\mathbb{K}(\gamma)$.
\item There are $e_1', e_2'\in \mathbb{L}$ such that $e_1't+e_2'$ reparametrizes
$\phi$ over (a field isomorphic to) $\mathbb{K}(\gamma)$.
\end{enumerate}
\end{theorem}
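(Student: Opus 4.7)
The plan is to identify $\K(\gamma)$ as the field of definition of the point $\phi(\infty)\in\mathcal{C}$ and to exploit the observation that every affine reparametrization $\phi(e_1t+e_2)$ evaluates to the same point $\phi(\infty)$ at $t=\infty$. The four minimality statements will then all follow from this fixed-point property once $\K(\gamma)$ is correctly produced.

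First I show $\phi(\infty)\in\mathcal{C}(\K(\gamma))$ for some $\K(\gamma)\subseteq\K(\alpha)$ with $[\K(\gamma):\K]=r$. If $\mathcal{C}$ is $\K$-parametrizable, then by Theorem~\ref{teo:witness_and_hypercircles} $\mathcal{U}$ is an $\alpha$-hypercircle for $\K\subseteq\K(\alpha)$ associated to a unit $u(t)=\frac{at+b}{t+d}\in\K(\alpha)(t)$; by Lemma~\ref{lem:lema_grado_d}, $[\K(d):\K]=r$, and setting $\rho=\phi\circ u\in\K(t)^N$ gives $\phi(\infty)=\rho(-d)\in\mathcal{C}(\K(d))$, so I take $\K(\gamma):=\K(d)$. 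Otherwise, by Theorem~\ref{teo:vteshc} I choose a quadratic $\eta$ over $\K$ with $\K(\eta)\not\subseteq\K(\alpha)$ (infinitely many such $\eta$ exist since $\K(\alpha)$ has only finitely many subextensions) for which $\mathcal{U}$ is an $\alpha$-hypercircle for $\K(\eta)\subseteq\K(\eta,\alpha)$. Using Proposition~\ref{prop:buen_punto_infinito} together with Proposition~\ref{prop:cuerpo_intermedio}, I may replace the associated unit by a $\K(\eta)$-equivalent one whose pole $-d$ corresponds to a point at infinity of $\mathcal{U}$ with $\K(\alpha)$-coordinates; Galois descent then forces $d\in\K(\alpha)$. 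Since $\K(\eta,d)\cap\K(\alpha)=\K(d)$ (from $\eta\notin\K(\alpha)$ and $[\K(\eta,d):\K(d)]=2$) and $[\K(\eta,d):\K(\eta)]=r$, the tower forces $[\K(d):\K]=r$. From $\rho=\phi\circ u\in\K(\eta)(t)^N$ I obtain $\phi(\infty)=\rho(-d)\in\mathcal{C}(\K(\eta,d))\cap\K(\alpha)^N=\mathcal{C}(\K(d))$, and set $\K(\gamma):=\K(d)$.

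Next, I construct the affine reparametrization. Since $\mathcal{C}$ is rational and carries the $\K(\gamma)$-rational point $\phi(\infty)$, it is $\K(\gamma)$-parametrizable by the classical argument (reduce to a conic via Theorem~\ref{teo:Hilbert-Hurwitz}, then apply a line pencil through the rational point). I pick a $\K(\gamma)$-parametrization $\rho_\gamma$ of $\mathcal{C}$ satisfying $\rho_\gamma(\infty)=\phi(\infty)$, which is possible by post-composing any $\K(\gamma)$-parametrization with a suitable $\K(\gamma)$-M\"obius. Writing $\rho_\gamma=\phi\circ u'$ for a M\"obius $u'\in\K(\alpha)(t)$, the identity $\phi(u'(\infty))=\phi(\infty)$ and birationality of $\phi$ force $u'(\infty)=\infty$, so $u'(t)=f_1t+f_2$ is affine with $f_1,f_2\in\K(\alpha)$, yielding the desired reparametrization.

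For the minimality, consider any $\psi=\phi(e_1t+e_2)$ with coefficient field $\L$. The identity $\psi(\infty)=\phi(\infty)$ forces the coordinates of $\phi(\infty)$, which generate $\K(\gamma)$, to lie in $\L$, giving $\K(\gamma)\subseteq\L$ and claims (1)--(3). For claim (4), both $\psi$ and $\rho_\gamma$ are birational parametrizations of $\mathcal{C}$ defined over $\L$ and taking the same value $\phi(\infty)$ at $\infty$, so they differ by a M\"obius $m\in\L(t)$ fixing $\infty$, hence affine: $m(t)=\lambda t+\mu$ with $\lambda,\mu\in\L$; inverting yields $e_1',e_2'\in\L$ with $\phi(e_1't+e_2')=\rho_\gamma\in\K(\gamma)(t)^N$. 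The main obstacle is the Galois-theoretic construction of $\K(\gamma)$ in the non-$\K$-parametrizable case, where the freedom of Theorem~\ref{teo:vteshc} to pick $\eta\notin\K(\alpha)$ is essential for the descent from $\K(\eta,d)$ down to $\K(d)\subseteq\K(\alpha)$; the remaining steps are routine consequences of birationality and the preservation of $\phi(\infty)$ by affine maps.
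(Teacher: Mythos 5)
There is a genuine gap at the heart of your strategy: you identify $\K(\gamma)$ with the field generated by the coordinates of $\phi(\infty)$ and you characterize affine reparametrizations as those parametrizations taking the value $\phi(\infty)$ at $t=\infty$. Both identifications are only valid when $\phi(\infty)$ is a \emph{regular value} of $\phi$, i.e.\ when the inverse rational map $\phi^{-1}$ is defined at $\phi(\infty)$ and returns $\infty$. If $\phi(\infty)$ is a multiply-traced point of $\C$ (typically a singularity), the field of that point can be strictly smaller than $\K(d)$. Concretely, take $\rho(t)=(t^2,t^3-2t)$, a $\Q$-birational parametrization of the nodal cubic $y^2=x(x-2)^2$ whose node $(2,0)$ has preimages $t=\pm\sqrt{2}$; set $u(t)=\frac{1}{t-\sqrt2}$ and $\phi=\rho\circ u^{-1}$, regarded as a parametrization over $\Q(\alpha)$ for some $\alpha$ of degree $4$ containing $\sqrt2$. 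Then the witness variety has a component of degree $r=2$ (it is the hypercircle of $u$, and $[\Q(\sqrt2):\Q]=2$), but $\phi(\infty)=\rho(\sqrt2)=(2,0)$ is a $\Q$-rational point. Your argument for item~(1) then only yields $[\L:\K]\geq 1$, not $\geq r$; your step ``$\phi(u'(\infty))=\phi(\infty)$ and birationality force $u'(\infty)=\infty$'' fails because $u'(\infty)$ may be the \emph{other} preimage of the node; and likewise two $\L$-parametrizations agreeing at $\infty$ need not differ by a M\"obius fixing $\infty$, so item~(4) breaks. (A secondary issue: $\phi(\infty)$ may be a point at infinity of $\C\subseteq\F^N$, so ``the field generated by its coordinates'' must be handled projectively.)

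The paper's proof avoids the point $\phi(\infty)$ altogether and works with units of the hypercircle: after passing to $\K(\eta)$ as in Theorem~\ref{teo:vteshc} and extracting $\gamma$ from a point at infinity of $\mathcal{U}$ via Proposition~\ref{prop:cuerpo_intermedio}, it composes the unit $\frac{at+b}{t+d}$ with $\frac1t-d$ to show the witness variety over $\K(\gamma)$ is a line (giving the affine reparametrization), and for minimality it compares the two units $w_1,w_2$ reparametrizing $\phi(e_1t+e_2)$ over $\K(\eta)$: the relation $w_1=w_2\circ w_3$ with $w_3\in\K(\eta)(t)$ forces $d\in\L(\eta)$, whence $[\L:\K]\geq[\K(\eta,d):\K(\eta)]=r$ unconditionally. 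Your Galois-theoretic bookkeeping in the non-parametrizable case (choosing $\eta$ outside the normal closure, descending from $\K(\eta,d)$ to a degree-$r$ subfield of $\K(\alpha)$) is essentially the paper's and is fine, but the core minimality mechanism must go through the unit $d$ (or equivalently the structure of the witness variety), not through the value of $\phi$ at $\infty$.
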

\begin{proof}
If $\mathcal{C}$ is not parametrizable over $\K$, by Theorem~\ref{teo:vteshc},
there is a field $\K(\eta)$ quadratic over $\K$ such that $\mathcal{U}$ is an
$\alpha$-hypercircle for the extension $\mathbb{K}(\eta) \subseteq
\mathbb{K}(\eta, \alpha)$ and the intersection of $\K(\eta)$ and
the normal closure of $\K(\alpha)$ is $\K$. If $\mathcal{C}$ is parametrizable
over $\K$, then it is trivially true that there exists such a field $\K(\eta)$.

Since $\mathcal{U}$ is an $\alpha$-hypercircle for the extension
$\mathbb{K}(\eta) \subseteq \mathbb{K}(\eta, \alpha)$, there is an associated
unit $u(t)=\frac{at+b}{t+d}\in \mathbb{K}(\eta, \alpha)(t)$ so that
$\phi(u(t))\in \K(\eta)(t)$. By Proposition~\ref{prop:cuerpo_intermedio}, there
is a point at infinity $[b_0: \ldots: b_{n-1}: 0]$ of $\mathcal{U}$, $b_i\in
\K(\alpha)$, $0\leq i\leq n-1$ such that $\mathbb{K}(\eta, d) =\mathbb{K}(\eta,
b_0, \ldots, b_{n-1})$. Let $\gamma$ be a primitive element for the extension
$\K\subseteq \K(b_0,\ldots, b_{n-1})$, $\gamma\in \K(\alpha)$. We have that
$[\K(\eta, d): \K(\eta)]=r$, so $[\K(\eta, \gamma) :\K(\eta)]=r$ and, since
$\K(\eta)$ intersects the normal closure of $\K(\alpha)$ trivially, we have that
$[\K(\gamma): \K]=r$.

Let us perform Weil's construction again but now with respect to the extension
$\K(b_0,\ldots,b_{n-1})\subseteq \K(\alpha)$ with primitive element $\alpha$.
Since the minimal polynomial of $\alpha$ over $\K(\gamma)$ equals the minimal
polynomial of $\alpha$ over $\K(\eta, \gamma)$, we get that the witness variety
$\mathcal{U}'$ of $\mathcal{C}$ for the extension $\K(\gamma)\subseteq
\K(\alpha)$ equals the witness variety of $\mathcal{C}$ for the extension
$\K(\eta, \gamma)=\K(\eta, d)\subseteq \K(\eta, \alpha)$.

Now $\frac{at+b}{t+d}$ reparametrizes $\phi$ over $\K(\eta)$, so $\frac{at +b}{t
+d}$ is also an associate unit of $\mathcal{U}'$ for the extension $\K(\eta,
d)\subseteq \K(\eta, \alpha)$.
Since $1/t-d\in \K(\eta, d)(t)$, we know that
\[\frac{at+b}{t+d}\circ \left(\frac{1}{t} -d\right)=a+t(b-ad)\]
is also a linear fraction associated to $\mathcal{U}'$ (for the extension
$\K(\eta, \gamma)\subseteq \K(\eta, \alpha)$ with primitive element $\alpha$).
In this case the denominator is $1$, so $\mathcal{U}'$ must be a line.

To sum up, $\mathcal{U}'$, the witness variety of $\mathcal{C}$ for the
extension $\K(\gamma) \subseteq \K(\alpha)$ with primitive element $\alpha$ is a
line, defined over $\K(\gamma)$, so it has an associated unit of the form
$f_1t+f_2\in \K(\alpha)(t)$ such that $\phi(f_1t+f_2)\in \K(\gamma)(t)$.

For the second part, let $\eta$ be a quadratic element in the conditions of
Theorem~\ref{teo:vteshc} such that it does not belong to the normal closure of
$\mathbb{L}(\alpha)$ over $\mathbb{K}$. Let $u=\frac{at+b}{t+d}\in \K(\eta,
\alpha)(t)$ be a unit that
reparametrizes $\mathcal{C}$ over $\mathbb{K}(\eta)$. Let $\phi_e=
\phi(e_1t+e_2)\in \mathbb{L}(t)$. On the one hand,
\[w_1(t)=\frac{\frac{a-e_2}{e_1}t
+\frac{b-e_2d}{e_1}}{t+d}=\left(\frac{t-e_2}{e_1}\right)\circ
\frac{at+b}{t+d}\in \mathbb{L}(\eta, \alpha, e_1, e_2)(t)\]
reparametrizes $\phi_e$ over $\mathbb{K}(\eta)$. On the other hand, by
Theorem~\ref{teo:witness_variety_properties}, there is another unit
$w_2=\frac{a't+b'}{t+d'}\in \mathbb{L}(\eta)(t)$ that reparametrizes $\phi_e$
over $\mathbb{K}(\eta)$. Then, there is a unit $w_3=\frac{a''t+b''}{t+d''}\in
\mathbb{K}(\eta)(t)$ such that $w_1=w_2\circ w_3\in \mathbb{L}(\eta)(t)$. Hence,
$d\in \mathbb{L}(\eta)$. So
\[[\L: \K]=[\L(\eta): \K(\eta)]= [\K(\eta, d): \K(\eta)][\L(\eta):\K(\eta, d)].\]
Since $[\K(\eta, d): \K(\eta)]=r$, $[\L :\K] \geq r$ and we have the first
item.

Now, $\K(\eta,d)$ is $\K(\eta)$-isomorphic to $\K(\eta, \gamma)$. Then
$\L(\eta)$ contains a field that is $\K(\eta)$-isomorphic to $\K(\eta, \gamma)$.
Let $M(x)$ be the minimal polynomial of $\gamma$ over $\K$. Then, there is a
root of $M(x)$ in $\L(\eta)$. By the election of $\eta$, this root belongs to
$\L$. So $\L$ contains a field $\K$-isomorphic to $\K(\gamma)$. The rest of the
items follows easily from this one and the proof of the first part.
\end{proof}

As a corollary of this Theorem, we have the following algorithm to compute an
optimal affine reparametrization.

\begin{algo}\mbox{}\\
{\bf input:} A computable field $\mathbb{K}$ (with factorization) finitely
generated over $\mathbb{Q}$. An element $\alpha$ algebraic of degree $n$ over
$\mathbb{K}$. A birational parametrization $\phi(t)\in \mathbb{K}(t)^N$ of a
rational curve
$\mathcal{C}$.\mbox{}\\
{\bf output:} \begin{itemize}\item[--] If $\mathcal{C}$ is not defined over
$\mathbb{K}$ return ``FAIL".
\item[--] If $\mathcal{C}$ is defined over $\K$ return $at+b\in \K(\alpha)[t]$
such that \[[\K(\textrm{coeffs}(\phi(at+b))): \K]\] is the smallest possible.
\end{itemize}
\begin{enumerate}
\item Compute the witness variety $\mathcal{Z}$ of $\mathcal{C}$ for the
extension $[\K(\alpha): \K]$ and primitive element $\alpha$.
\item Compute the points at infinity of $\mathcal{Z}$.
\item If there are no points at infinity ($\mathcal{Z}$ is zero dimensional)
then {\bf Return} ``FAIL".
\item Else, (there are finitely many points at infinity).\\
      Compute a point $[b_0: \ldots: b_{n-1}: 0]$ at infinity with coefficients
in
$\K(\alpha)$.
\item Compute $r=[\K(b_0, \ldots, b_{n-1}) :\K]$.
\item If $r=[\K(\alpha): \K]$, {\bf Return} $at+b=t$.
\item If $r=1$
\begin{itemize}
\item Compute $(\psi_0,\ldots,\psi_{n-1})$ a linear parametrization of the
unique one-dimensional component of $\mathcal{Z}$ over $\mathbb{K}$ (easy, since
it is a line defined over $\K$).
\item {\bf Return} $at+b=\sum_{i=0}^{n-1} \psi_i(t)\alpha^i$.
\end{itemize}
\item (Else) Compute the minimal polynomial of $\alpha$ over
$\K(b_0,\ldots,b_{n-1})$.
\item Compute the witness variety $\mathcal{Z}'$ of $\mathcal{C}$ for the
extension $\K(b_0,\ldots,b_{n-1})\subseteq \K(\alpha)$.
\item Compute $(\psi_0,\ldots, \psi_{n/r-1})$ a linear parametrization of the
unique one-dimensional component of $\mathcal{Z}'$ (which is a line) with
coefficients in the field $\K(b_0, \ldots, b_{n-1})$.
\item {\bf Return} $at+b=\sum_{i=0}^{n/r-1}\psi_i(t)\alpha^i$.
\end{enumerate}

\end{algo}

\begin{example}
Let $\alpha$ be a root of $x^4-4x^3+12x^2-16x+8$, and let $\mathcal{C}$ be the
parametric curve given by
\[\begin{array}{l} \displaystyle{x=\frac{-6 +18\alpha -9\alpha^2 +6\alpha^3
+(44 -52\alpha +18\alpha^2 -4\alpha^3)t -4t^2}{ -22  +26\alpha -9\alpha^2
+2\alpha^3 +4t}},\\[1.5em]
y=\displaystyle{\frac{ -12 -2\alpha +9\alpha^2 -\alpha^3 +\left( 4 +4\alpha
+4\alpha^2 \right)t +\left( 12 -16\alpha +6\alpha^2 -2\alpha^3\right)t^2}{-22
+26\alpha -9\alpha^2 +2\alpha^3 +4t}}
\end{array}\]
The hypercircle $\mathcal{U}$ associated to this curve has implicit equations:
\[\{4t_2 +12t_3 -3, 5 +2t_{1} -16t_3, 2t_0^2 +24t_3t_0 +80t_3^2 -10t_0 -52t_3
+15\}.\]
One can easily check that this hypercircle is non primitive, because it is
contained in the hyperplane $4t_2+12t_3-3$. Moreover, from its equations, it is
a conic. The points at infinity are: \[[2\gamma: 8: -3: 1:0]\] where $\gamma$ is
a
root of $x^2+6x+10$. The roots of this polynomial in $\mathbb{Q}(\alpha)$ are
$-4\alpha+3/2\alpha^2-1/2\alpha^3$ and $-6+4\alpha-3/2\alpha^2+1/2\alpha^3$.
Choose for example $\gamma=-4\alpha+3/2\alpha^2-1/2\alpha^3$. Then, the minimal
polynomial of $\alpha$ over $\mathbb{Q}(\gamma)$ is
$x^2+(-8-2\gamma)x+8+2\gamma$. Now,
we rewrite the parametrization of $\mathcal{C}$ over this extension of fields:
\[\begin{array}{l} x=\displaystyle{\frac{(21 +9\gamma) \alpha -39
-15\gamma+((6\gamma +14) \alpha -2\gamma -2) t-2t^2}{1 +\gamma+( -3\gamma
-7)\alpha +2t}}\\[1.5em]
y=\displaystyle{\frac{-30 -5\gamma +(6\gamma +27)\alpha+( -14 -4\gamma +(18
+4\gamma) \alpha)t +(2\gamma +6)t^2}{1 +\gamma+( -3\gamma -7)\alpha +2t}}
\end{array} \]
we compute the hypercircle associated to the extension $\mathbb{Q}(\gamma)
\subseteq
\mathbb{Q}(\gamma, \alpha)$. We know that it will be a line, in fact, the
computation
yields $2t_1-3\gamma-7$, that can be parametrized by $(s,(3\gamma+7)/2)$. Hence,
the affine substitution $t=t+(3\gamma+7)/2\alpha$ in the parametrization yields
a parametrization over the subfield $\mathbb{Q}(\gamma)$
\[\begin{array}{l}
x=\displaystyle{\frac{-3\gamma-2t^2\gamma+4t\gamma-5+6t^2-4t^3}{5-8t+4t^2}}\\[
1.5em]
y=\displaystyle{2\frac{7t\gamma+2t^3\gamma-3\gamma-6t^2\gamma-10+23t+6t^3-19t^2}
{5-8t+4t^2}} \end{array} \]
\end{example}

\section*{Acknowledgements}
The author wants to thank Tomas Recio, Rafael Sendra and Carlos Villarino for useful discussion and sugestions. And Erwan Brugall\'e for helping correcting a typo about integral extensions.

\noindent
Luis Felipe Tabera\\
IMDEA Mathematics Foundation\\
M\'odulo C-IX 3th floor Facultad de Ciencias - UAM\\
Ciudad Universitaria de Cantoblanco\\
E-28049 Madrid, Spain.\\
e-mail: luis.tabera@imdea.org\\
http://personales.unican.es/taberalf/

\end{document}